\newcommand{\zeros}{\mathbf 0}
\newcommand{\R}{{\mathbb R}}
\newcommand{\N}{{\mathbb N}}
\newcommand{\hoff}[1]{H(#1)}
\newcommand{\prox}{\mathbf{prox}}
\newcommand{\dist}{\mathop{\bf dist{}}}
\newcommand{\proj}{\mathop{\bf proj}}
\newcommand{\argmin}{\mathop{\rm argmin}}
\newcommand{\maximize}{\mathop{\rm maximize}}
\newcommand{\minimize}{\mathop{\rm minimize}}
\newcommand{\bz}{\bar{z}}
\newcommand{\bx}{\bar{x}}
\newcommand{\by}{\bar{y}}
\newcommand{\defeq}{\vcentcolon=}
\newcommand{\lc}{\ell_c}
\newcommand{\uc}{u_c}
\newcommand{\lv}{\ell_v}
\newcommand{\uv}{u_v}
\newcommand{\cR}{\mathcal{R}}
\newcommand{\cY}{\mathcal{Y}}
\newcommand{\cX}{\mathcal{X}}
\newtheorem{theorem}{Theorem}
\newtheorem*{theorem*}{Theorem}
\newtheorem{lemma}{Lemma}
\newcommand{\callGenericIterativeAlgorithm}[1]{\hyperref[function:GenericIterativeAlgorithm]{\GenericIterativeAlgorithm{#1}}}
\newcommand{\epsZero}{\epsilon_{\text{zero}}}
\newcommand{\InitialTau}[1]{\tau_{\text{init}}}
\newcommand{\ZStar}{Z^{\star}}
\newcommand{\xStar}{x^{\star}}
\newcommand{\yStar}{y^{\star}}
\newcommand{\Lag}[0]{\mathcal{L}}
\newcommand{\T}{\top}
\newcommand{\ttx}{\tilde{x}}
\newcommand{\tA}{\tilde{A}}
\newcommand{\tc}{\tilde{c}}
\newcommand{\tlc}{\tilde{\lc}}
\newcommand{\tlv}{\tilde{\lv}}
\newcommand{\tuc}{\tilde{\uc}}
\newcommand{\tuv}{\tilde{\uv}}
\newcolumntype{R}{r}
\newcolumntype{C}{c}
\newcolumntype{L}{l}
\begin{document}
\lstset{
breaklines=true,
basicstyle=\small\ttfamily,
breakatwhitespace=true,
breakautoindent=false,
breakindent=0pt,
resetmargins=true,
}
	\title{PDLP: A Practical First-Order Method for Large-Scale Linear Programming}

	\author{ David Applegate\thanks{
  Google Research (\texttt{\{dapplegate, wschudy\}@google.com});} \qquad Mateo D\'iaz\thanks{Johns Hopkins University
   (\texttt{mateodd@jhu.edu});} \qquad
  Oliver Hinder\thanks{University of Pittsburgh (\texttt{ohinder@pitt.edu});} \qquad 
  Haihao Lu\thanks{
  MIT and Google Research (\texttt{haihao@mit.edu});} \\
  Miles Lubin\thanks{Hudson River Trading (\texttt{miles.lubin@gmail.com}; work performed primarily while at Google Research)}  \qquad
  Brendan O'Donoghue\thanks{
  Google DeepMind (\texttt{bodonoghue@deepmind.com});}  \qquad
  Warren Schudy$^\ast$
}

\date{}
\maketitle
\begin{abstract}
We present PDLP, a practical first-order method for linear programming (LP) designed to solve large-scale LP problems. PDLP is based on the primal-dual hybrid gradient (PDHG) method applied to the minimax formulation of LP. PDLP incorporates several enhancements to PDHG, including diagonal preconditioning, presolving, adaptive step sizes, adaptive restarting, and feasibility polishing. Our algorithm is implemented in C++, available in Google's open-source OR-Tools library, and supports multithreading.

To evaluate our method, we introduce a new collection of eleven large-scale LP problems with sizes ranging from 125 million to 6.3 billion nonzeros. PDLP solves eight of these instances to optimality gaps of 1\% (with primal and dual feasibility errors of less than $10^{-8}$) within six days on a single machine. We also compare PDLP with Gurobi barrier, primal simplex, and dual simplex implementations. These traditional methods are designed to solve linear programs to much tighter optimality gaps but struggle to solve these instances. Gurobi barrier solves only three instances, exceeding our 1TB RAM limit on the other eight. While primal and dual simplex are more memory-efficient than the barrier method, they are slower and solve only three instances within six days.

Compared with the conference version of this work (in: Advances in Neural Information Processing Systems 34 (NeurIPS 2021)), the key new contributions are: (i) feasibility polishing, a technique that quickly finds solutions that are approximately optimal but almost exactly feasible (without which only two of the eleven problems can be solved); (ii) a multithreaded C++ implementation available in Google OR-Tools; and (iii) a new collection of large-scale LP problems.
Note that the conference version should be referred to for comparisons with SCS and ablation studies, which we do not repeat in this paper.
\end{abstract}

\section{Introduction}

Applications of linear programming (LP) are widespread and span many different fields~\cite{boyd2004convex}. 
Simplex \cite{maros_simplex_encyclopedia} and interior-point methods \cite{wright2005interior} are the standard
approaches to solving LP problems. They 
are widely used in practice and are known for their reliability and
ability to solve instances to high numerical precision.
However, these methods face scaling limitations when applied to extremely large problems.
For interior point methods, the limiting factor is often the memory
required by sparse matrix factorizations.
While the simplex method is less likely to run out of memory because it
factorizes a smaller matrix, it often requires prohibitively many
iterations. Moreover, it is difficult to adapt these methods to exploit highly-parallel
modern computing hardware such as multithreading, distributed computing, and graphics processing units (GPUs).

A parallel stream of research that has had relatively little impact on the practice of LP,
first-order methods (methods that 
use function and gradient information)
have undergone an intense period of study
over the past twenty years 
\cite{duchi2011adaptive,
beck2009fast,
mcmahan2010adaptive,
chambolle2011first,
o2015adaptive,
heyuan2012},
resulting in incredible success
in machine learning \cite{krizhevsky2012imagenet,kaplan2020scaling}.
The benefit of these methods is that 
they have cheap iterations that can readily utilize 
highly parallel modern computing hardware
and quickly obtain approximately optimal solutions, which 
are sufficient for most large-scale machine learning applications.
Building on this literature, this paper enhances an existing first-order method, primal-dual hybrid gradient (PDHG),
popularized by Chambolle and Pock \cite{chambolle2011first} in the computer vision community,
to develop a specialized first-order method for LP.

Despite their appealing computational properties,
a longstanding barrier to adopting first-order methods for LP
is their inability to reliably solve instances to high numerical precision
in reasonable timeframes.
For LP, it is especially critical to find
feasible solutions, both primal and dual.
Even small violations of primal constraints
(for instance, flow conservation) are difficult for
practitioners to interpret. Similarly, finding dual 
feasible solutions is just as essential because of LP's
use for providing bounds on the optimal objective value such as within branch-and-bound methods \cite{wolsey2021integer}.
As a concrete example of the risks associated with an inaccurately solved LP problem, consider the `irish-electricity' instance, an LP instance based on a unit commitment 
case study of the Irish electric grid \cite{miplib_irish_electricity,carroll2017sub}, which has been used in Mittelmann's LP benchmark \cite{mittelmannbenchmark}.
On this instance,
SCS 3.27 \cite{scs}, a state-of-the-art first-order method 
with its default termination tolerance of $10^{-4}$, 
terminates in 40 seconds.
However, the objective value for the returned solution is $2.21 \times 10^6$,
which is off from the true optimal objective value\footnote{Found using Gurobi with crossover enabled.} of 
$2.56 \times 10^6$ by more than 13\%.
On the other hand, SCS with a tighter termination tolerance of $10^{-8}$ does not
terminate successfully within an hour. 
This issue---where moderate feasibility violations lead to such undesirable behavior,
which is only detectable using solutions provided from a more accurate solver---justifies practitioners' caution in adopting first-order methods for LP.

To tackle this reliability issue, 
we apply a series of enhancements to PDHG including: adaptive restarting, dynamic primal-dual step size selection, and diagonal preconditioning. 
We call this enhanced PDHG algorithm, PDLP (PDHG for LP). 
The impact of these enhancements 
was demonstrated in the conference 
version of this paper \cite{applegate2021practical}. 
For example, for finding high accuracy solutions on a standard LP benchmark, NETLIB \cite{netlib},
our enhancements enabled a $75\times$ speed-up over PDHG without any enhancements
and a $19\times$ speed-up \cite[Table~24]{applegate2021practical} over SCS \cite{scs}, a first-order method for conic optimization.

Compared with the conference version of this paper \cite{applegate2021practical}, the new contributions of this paper include:
\begin{enumerate}
\item An implementation of PDLP in multithreaded C++ code which is now part of Google's open-source OR-Tools package (\url{https://developers.google.com/optimization}).
The conference version of our paper uses single-thread Julia code\footnote{The prototype Julia code, FirstOrderLp.jl, is available at \url{https://github.com/google-research/FirstOrderLp.jl}.}.
\item A new collection of large-scale LP instances with between 125 million and 6.3 billion nonzeros to benchmark our method and a set of experiments demonstrating favorable results compared to Gurobi, a state of the art commercial solver.
In comparison, the conference paper focused on
smaller standard benchmark instances: the largest problem tested had 
90 million nonzeros.
\item An additional enhancement that allows our algorithm to rapidly find solutions that are feasible but only approximately optimal.
Our computational results demonstrate this new heuristic is extremely effective for these large-scale problems.
\end{enumerate}

Note that this paper does not subsume the conference version because we do not repeat its numerical experiments. These experiments were:

\begin{enumerate}
	\item An ablation study that individually disabled each enhancement and also compared with reasonable alternatives.
	\item Starting from a naive PDHG implementation, a demonstration of the incremental impact of each  enhancement.
	\item Comparisons with existing solvers (SCS and Gurobi) on instances from medium-size LP benchmarks and a large-scale LP example derived from the PageRank problem \cite{brin1998anatomy}.
\end{enumerate}

Given the C++ code is the exclusive focus of this paper,
we decided neither to repeat results verbatim from the conference paper nor rerun
 these experiments using the C++ code as the conclusions would largely be the same. 
For reference \cite[Table 8]{lu2025cupdlp} shows that the single threaded C++ code achieves roughly a $1.5\times$ speed-up over the Julia code.

\paragraph{Outline}
Section~\ref{sec:pdhg} introduces the PDHG algorithm 
and the notation we use throughout the paper.
Section~\ref{sec:practical-algorithmic-improvements}
explains the practical improvements we have added to 
the PDHG algorithm. Section~\ref{sec:feasibility}
introduces our new feasibility polishing algorithm.
Section~\ref{sec:implementation} describes the C++ implementation. 
Section \ref{sec:instances} describes the collection of large-scale instances.
Section~\ref{sec:numerical-results} presents our numerical results. %

\subsection{Related literature}
\paragraph{Classic methods for linear programming.} 
The simplex method was the first successful algorithm for LP, having been developed by Dantzig and others in the late 1940s, with
improvements over the subsequent decades as the economic importance of LP grew~\cite{maros_simplex_encyclopedia}.
The simplex method begins at a vertex and pivots between vertices, monotonically improving the objective function with each step.
Interior-point methods~\cite{wright2005interior} were a major breakthrough in the 1980s,
coupling much stronger theoretical guarantees with practically successful implementations.
In contrast to the simplex method, interior-point methods (IPMs) traverse the interior of the feasible region, progressing toward optimality by following a so-called central path \cite{renegar1988polynomial}. Commercial and open-source solvers such as Gurobi, CPLEX, COPT, and HiGHS leverage both of these methods, often in combination, to provide reliable and accurate solutions to LP problems.

There have also been several papers that modify interior point methods so that instead of using a factorization to solve the linear systems that arise at each iteration they use preconditioned iterative methods \cite{gondzio2012,fountoulakis2014matrix,zanetti2023new,gondzio2022general,cui2019implementation}. 
This reduces the memory overhead of these methods and the cost of each inner iteration performed by the iterative method (e.g., conjugate gradient) is roughly the cost of a matrix-vector multiplication (assuming the preconditioner is cheap to implement).
For example, \citet{fountoulakis2014matrix} demonstrate excellent performance of these methods on large-scale compressed sensing problems.

\paragraph{PDHG.}
The primal-dual hybrid gradient (PDHG) method, originally proposed in~\cite{zhu2008efficient, esser2010general}, has been extended and analyzed by numerous researchers~\cite{esser2010general, pock2009algorithm, chambolle2011first, he2012convergence, condat2013primal, chambolle2018stochastic, alacaoglu2019convergence}. PDHG is a form of operator splitting~\cite{bauschke2011convex, ryu2016primer}, and it is closely related to the alternating direction method of multipliers (ADMM) and Douglas-Rachford splitting (DRS)~\cite{eckstein1992douglas, boyd2011distributed, o2020equivalence}, all of which can be interpreted as preconditioned instances of the proximal point method (PPM)~\cite{rockafellar1976monotone, eckstein1992douglas, parikh2014proximal,lu2023unified}. 
However, unlike ADMM, DRS, or PPM, when applied to LP problems, PDHG is matrix-free, relying solely on matrix-vector multiplications.

\paragraph{PDLP.} The conference version of this work \cite{applegate2021practical} introduced PDLP, a restarted PDHG with
several practical enhancements specialized for solving LP problems. Experiments were based on a prototype Julia implementation.
This paper includes an additional enhancement, and the experiments discussed in this paper are based on a C++ implementation of PDLP, which has been open-sourced through Google OR-Tools since 2022 \cite{ortools}. More recently, a GPU implementation, dubbed cuPDLP, demonstrated strong numerical performance comparable to commercial LP solvers, which garnered attention from both the academic community and the solver industry \cite{lu2025cupdlp,lu2023cupdlpc}. Variants of (cu)PDLP have been incorporated into both commercial and open-source optimization solvers, including COPT \cite{copt}, FICO Xpress \cite{xpress2014fico}, HiGHS \cite{huangfu2018parallelizing}, and NVIDIA cuOpt \cite{cuopt}.

The success of PDLP has spurred theoretical analysis. Lu and Yang \cite{lu2022infimal} proved that PDHG exhibits a linear convergence rate when applied to LP. Applegate et al. \cite{applegate2023faster} showed that a faster linear convergence rate can be achieved through restarts, with the rate matching the worst-case lower bound. Applegate et al.  \cite{applegate2024infeasibility} demonstrated that for infeasible or unbounded LP instances, PDHG iterates diverge toward a certificate of infeasibility or unboundedness, allowing for infeasibility detection without additional computation. Lu and Yang \cite{lu2024geometry} established that PDHG displays a two-stage behavior when applied to LP: in the first stage, it identifies the optimal active variables in finitely many iterations; in the second stage, it solves a homogeneous linear inequality system at an improved linear rate. This two-stage behavior helps explain the slow convergence observed when an LP instance is near-degenerate. Hinder \cite{hinder2024worst} shows an  iteration complexity bound for restarted PDHG applied to totally unimodular LP 
of $O(H m^{2.5} \sqrt{\text{nnz}(A)} \ln( H n / \epsilon))$ where $H$ is the largest absolute value of the right-hand side and objective vector, $n$ is the number of columns, $m$ is the number of rows, $\text{nnz}(A)$ the number of nonzeros in the constraint matrix, and $\epsilon$ is the distance to optimality measured in Euclidean norm. 
Unlike other results, this bound does not involve any condition numbers.
Xiong and Freund \cite{xiong2024role} introduced the concept of level-set geometry, showing that PDHG converges more quickly for LP instances with regular, non-flat level sets around the optimal solution. Recently, additional numerical improvements have been proposed to further accelerate the algorithm. In particular, Xiong and Freund \cite{xiong2024role} introduced a central-path-based preconditioner for PDHG that significantly improves convergence. Further,
Lu and Yang \cite{lu2024restarted} incorporated the Halpern iteration \cite{halpern1967fixed} into PDLP, achieving accelerated theoretical guarantees and improved practical performance.

\paragraph{Other first-order methods for LP.}

Lan, Lu, and Monteiro \cite{Lan2011} and Renegar \cite{Renegar2019} developed first-order methods (FOMs) for LP as a particular case of semidefinite programming with sublinear convergence rates. The FOM-based solvers apply to more general problem classes like cone or quadratic programming. In contrast, some of the enhancements that constitute PDLP are specialized, either in theory or practice, for LP (namely restarts~\cite{applegate2023faster} and presolving). A number of authors \cite{wang2017admmlp, eckstein1990alternating, gilpin2012first, yang2018rsg, necoara2019linearfom} have proposed linearly convergent FOMs for LP; however, to our knowledge, none have been the subject of a comprehensive computational study. FOM-based software packages that can be used to solve LP include the following:
\begin{itemize}
  \item %
		{SCS} \cite{o2016conic, o2021operator} is an ADMM-based solver for convex quadratic cone programs implemented in C. SCS applies ADMM \cite{boyd2011distributed} to the homogeneous self-dual embedding of the problem \cite{ye1994nl}, which yields infeasibility certificates when appropriate. One of the key computational challenges of ADMM-based methods is solving a linear system at every iteration. SCS provides two options to tackle this subroutine: a direct solver, which computes and reuses the factorization of the matrix, and an indirect solver, which uses conjugate gradient methods to iteratively solve these linear systems, offering scalability trade-offs.
	\item OSQP \cite{stellato2020osqp} is another ADMM-based solver for convex quadratic programming implemented in C. Compared to SCS, a major difference is that OSQP applies ADMM to the original problem rather than the homogeneous self-dual embedding. It also has direct and indirect options for solving the linear systems at each iteration. A related solver, COSMO \cite{garstka_2019}, uses the same algorithm for convex quadratic cone programs and is implemented in Julia.
	\item ECLIPSE \cite{basu2020eclipse} is a distributed LP solver that employs accelerated gradient descent to solve a smoothed dual formulation of LPs. It is specifically designed to handle large-scale LPs with particular decomposition structures, such as those encountered in web applications.
  \item %
		{ABIP and ABIP+} \cite{lin2021admm} are interior-point method (IPM) solvers for cone programming. ABIP uses a few steps of ADMM to approximately solve the sub-problems that arise when applying a path-following barrier algorithm to the homogeneous self-dual embedding of the problem. Although IPMs are technically second-order methods, the use of ADMM in the inner loop allows the solver to scale to larger problems than traditional IPMs can handle.  ABIP+ \cite{deng2022new} adapts many of the improvements from PDLP to ABIP to improve its performance.
	\item TFOCS \cite{becker2011templates} is a MATLAB solver designed for solving a variety of convex cone problems that arise in areas such as signal processing, machine learning, and statistics, including applications like the Dantzig selector, basis pursuit denoising, and image recovery. In principle, TFOCS can also be applied to LP. It utilizes a variant of Nesterov's accelerated method~\cite{nesterov1983method} to solve a smoothed version of the primal-dual conic problem.
\item ALGENCAN \cite{andreani2008augmented} is a widely established software package for nonlinear programming that is based on the augmented Lagrangian method. The augmented Lagrangian subproblems are  solved by the inner solver GENCAN \cite{birgin2002large}, which utilizes active-set strategies and spectral projected gradients. Crucially, ALGENCAN is designed to operate in a fully matrix-free manner, i.e., only using gradient evaluations and Hessian-vector products.
\end{itemize}

\section{Preliminaries}\label{sec:pdhg}

In this section, we introduce the notation we use throughout the paper, summarize the LP formulations we solve, and introduce the baseline PDHG algorithm.

\paragraph{Notation.}
Let $\R$ denote the set of real numbers, $\R^{+}$ the set of nonnegative real numbers, and $\R^{-}$ the set of nonpositive real numbers. Let $\N$ denote the set of natural numbers (starting from one). We endow $\R^n$ with the standard dot product $\langle x, y \rangle = x^\top y$ and its induced norm $\| \cdot \|_2.$ We will also use $\| \cdot \|_p$ to denote the $\ell_p$ norm for a vector. The $\ell_{\infty}$-norm, $\|v \|_\infty$, computes the maximum absolute value of the vector $v.$ For a vector $v \in \R^n$, we use $v^+$ and $v^-$ for their positive and negative parts, i.e., $v_i^+ = \max\{0, v_i\}$ and $v_i^- = -\min\{0, v_i\}$. 
The symbol $\mathbf{1}$ denotes the vector of all ones. Given a matrix $A \in \R^{m \times n},$ the symbols $A_{i,\cdot}$ and $A_{\cdot,j}$ correspond to the $i$th row and $j$th column of $A$, respectively. We use $\|\cdot\|_2$ to denote the spectral norm for a matrix, i.e., its top singular value. 
We use $z^{k} = (x^{k}, y^{k})$ to denote the $k$th iterate of PDHG and
 $z^{n,t} = (x^{n,t}, y^{n,t})$ to denote iterates of PDLP where $n$ is the outer iteration counter and $t$ is the inner iteration counter.

Let $\cX \subseteq \R^n$ be a closed convex set. The orthogonal projection onto $\cX$ is given by $$\proj_{\cX}(x) = \argmin_{w\in \cX} \|x- w\|_2.$$ Define the mapping $p \colon \R^n \times (\R \cup \{-\infty\})^n \times (\R \cup \{\infty\})^n \to \R \cup \{\infty\}$ given by
$$
p(y; \ell, u) := u^\T y^+ -  \ell^\T y^- .
$$
The function $p(\cdot ; \ell, u)$ is piecewise linear and convex for any two vectors $\ell \in  (\R \cup \{-\infty\})^n$ and $u \in (\R \cup \{\infty\})^n$ satisfying $\ell \leq u$. With a slight abuse of notation, we will also use $p$ to denote an analogous function taking $m$-dimensional inputs as implied by the context.

\paragraph{Linear Programming.} We solve primal-dual LP problems of the form \cite{Vanderbei2020}:
\begin{equation}\label{eq:lp}
    \begin{aligned}[c]
    \minimize_{x\in \R^n}~~ &~ c^\top x & \\
\text{subject to:}~~ &~ \lc \leq Ax \leq \uc \\
& ~ \lv \leq x \leq  \uv,
    \end{aligned}
    \qquad\qquad\qquad
    \begin{aligned}[c]
\maximize_{y\in \R^{m}, r \in\R^{n}} \quad &  - p(-y; \lc, \uc) - p(-r ; \lv, \uv) \\
\text{subject to:}\quad & c - A^\T y = r \\
&  y \in \cY \\
& r \in \cR.
    \end{aligned}
\end{equation} 
where $A \in \R^{m \times n}$, $c \in \R^{n}$, $\lc \in (\R \cup \{- \infty\})^m$, $\uc \in (\R \cup \{\infty\})^m$, $\lv \in (\R \cup \{ -\infty \})^{n}$, $\uv \in (\R \cup \{ \infty \})^{n}$, and the sets $\cY\subseteq \R^{m}$ and $\cR \subseteq \R^{n}$ are Cartesian products with their $i$th components given by 
\begin{equation*}
\begin{aligned}
    &\cY_i := \begin{cases}
\{ 0 \} & (\lc)_i = -\infty, ~ (\uc)_i = \infty,  \\
\R^{-} & (\lc)_i = -\infty, ~ (\uc)_i \in \R ,\\
\R^{+} & (\lc)_i \in \R, ~ (\uc)_i = \infty, \\
\R & \text{otherwise}; 
\end{cases}
\quad \text{ and } \qquad
\end{aligned}
\begin{aligned}
    &\cR_i := \begin{cases}
\{ 0 \} & (\lv)_i = -\infty, ~ (\uv)_i = \infty,  \\
\R^{-} & (\lv)_i = -\infty, ~ (\uv)_i \in \R, \\
\R^{+} & (\lv)_i \in \R, ~ (\uv)_i = \infty, \\
\R & \text{otherwise};
\end{cases}
\end{aligned}
\end{equation*}
which are the values of $y$ and $r$ with a finite dual objective value.
We highlight that the formulation we consider \eqref{eq:lp} is slightly different from the formulation in the conference version of this paper~\cite{applegate2021practical}, which only considers a lower bound $\lc \leq Ax$ and has additional equality constraints $Gx = q$. 

This pair of primal-dual problems is equivalent to the minimax problem \cite{Vanderbei2020}: %
\begin{flalign}\label{eq:primal-dual}
\min_{x \in \cX} \max_{y \in \cY} \Lag(x,y) := c^\T x -    y^\T A x - p(y; -\uc, -\lc)
\end{flalign}
with $\cX := \{x \in \R^n : \lv \leq x \leq \uv \}$. Notice that the variable $r$ in \eqref{eq:lp} does not appear explicitly in the minimax formulation. 
This absence leaves an algorithmic choice for recovering the reduced costs, $r$, which impacts when the termination criteria is satisfied
but does not affect the iterates themselves.
A natural choice is to set $r = \proj_{\mathcal{R}}(c - A^\T y)$; we discuss a more advanced choice in Appendix~\ref{sec:reduced-cost-selection}.

\paragraph{PDHG.} When specialized to \eqref{eq:primal-dual}, the PDHG algorithm takes the form:
\begin{subequations}\label{eq:specialized-pdhg}
\begin{flalign}
x^{k+1} &= \proj_{\cX}(x^k - \tau (c - A^\top y^k)) \label{eq:specialized-pdhg:primal-step}\\
  y^{k+1} &= %
            y^k - \sigma A \left(2 x^{k+1} - x^k\right) - \sigma \proj_{[-\uc, -\lc]}\left(\sigma^{-1} y^k - A \left(2 x^{k+1} - x^k\right)\right) \label{eq:specialized-pdhg:dual-step}
\end{flalign}
\end{subequations}
where $\tau, \sigma > 0$ are primal and dual step sizes, respectively. 
For completeness we provide the derivation of \Cref{eq:specialized-pdhg} from PDHG in Appendix~\ref{sec:derive-pdhg-for-this-paper}.
A quick case-by-case argument shows \(y^{k+1} \in \cY\).
For instance, suppose the $i$-th coordinate bounds satisfy \((\lc)_{i} = -\infty\) and \((\uc)_{i} \in \mathbb{R}\), and let
$
\widehat{y}^{k} \;=\; y^k \;-\; \sigma\,A\left(2\,x^{k+1} \;-\; x^k\right).
$
Then, we have
\[
y_{i}^{k+1} \;=\;
  \begin{cases}
    0 & \text{if } \widehat{y}_{i}^{k} \;\ge\; -\sigma\,(\uc)_{i}, \\
    \widehat{y}_{i}^{k} \;+\; \sigma\,(\uc)_{i} & \text{otherwise},
  \end{cases}
\]
and, thus, in any case \(y_{i}^{k+1} \in \R^{-} = \cY_{i}\), as we wanted.
The other cases follow analogously.

PDHG is known to converge to an optimal solution when $\tau \sigma \| A \|_2^2 < 1$ \cite{condat2013primal,chambolle2016ergodic}.
We reparameterize the step sizes by \begin{equation}
    \tau = \eta / \omega  \quad \text{and} \quad \sigma = \omega \eta \qquad \text{with }\eta \in (0, \infty) \quad \text{and} \quad \omega \in (0,\infty).
\end{equation}
We call $\omega \in (0, \infty)$ the \emph{primal weight}, and $\eta \in (0,\infty)$ the \emph{step size}. Under this reparameterization, PDHG converges for all $\eta < 1 / \| A \|_2$. This parameterization allows us to control the scaling between the primal and dual iterates with a single parameter $\omega$. From now on and for the rest of the paper, we will use $z$ as a placeholder for the concatenation of primal and dual vectors $(x,y).$ We use the term \emph{primal weight} to describe $\omega$ because it is the weight on the primal variables in the following norm:
$$
\| z \|_{\omega} := \sqrt{ \omega \| x \|_2^2 + \frac{\| y \|_2^2}{\omega} }.
$$
This norm plays a role in the theory for PDHG \cite{chambolle2016ergodic} and later algorithmic discussions.

\section{Practical algorithmic improvements}\label{sec:practical-algorithmic-improvements}

\newcommand{\zc}[0]{z_{\text{c}}}
\newcommand{\rc}[0]{r_{\text{c}}}

In this section, we briefly describe algorithmic enhancements for PDHG that were introduced in the conference version of this paper~\cite{applegate2021practical}. We omit a discussion of presolve because we do not use it for the numerical experiments in this paper; see also the mention of presolve in Section~\ref{sec:implementation}. A separate section (Section~\ref{sec:feasibility}) is reserved to introduce feasibility polishing, which is new to this work.

Note that while our enhancements are inspired by theory, our focus is on practical performance. The algorithm as a whole does not have convergence guarantees. Guarantees for certain individual enhancements are noted at the end of the section. %

Algorithm~\ref{alg:practical-algorithm} presents pseudo-code for PDLP. %
We modify the step sizes, %
add restarts,  
dynamically update the primal weights, %
and use multithreading. %
Before running Algorithm~\ref{alg:practical-algorithm} we apply %
diagonal preconditioning. %

\paragraph{Periodic checks of restarts and termination criteria} There are some minor differences between the pseudo-code and the actual code.
In particular, we only evaluate the restart or termination criteria (Line~\ref{line:restart-or-termination}) every $64$ iterations\footnote{Condition (iii) of the restart criteria is checked every iteration.}. This reduces the associated overhead of these evaluations with minimal impact on the total number of iterations. We also
check the termination criteria before beginning the algorithm and if we detect a numerical error.

\begin{algorithm}
\SetAlgoLined
{\bf Input:} An initial solution $z^{0,0}$\;
Initialize outer loop counter $n\gets 0$, total iterations $k \gets 0$, step size $\hat \eta^{0,0} \gets 1/\| A \|_{\infty}$,  primal weight $\omega^{0} \gets$ \InitializePrimalWeight{$c, \lc, \uc$}\;
 \Repeat{termination criteria holds}{
 $t \gets 0$\;
 \Repeat{restart or termination criteria holds}{
    $z^{n,t+1}, \eta^{n,t+1}, \hat \eta^{n,t+1} \gets$ \AdaptiveStepOfPDHG{$z^{n,t}, \omega^{n}, \hat \eta^{n,t}, k$} \;
$\bz^{n,t+1}\gets\frac{1}{\sum_{i=1}^{t+1} \eta^{n,i}} \sum_{i=1}^{t+1} \eta^{n,i} z^{n,i}$ \; 
    $\zc^{n,t+1} \gets$ \GetRestartCandidate{$z^{n,t+1}, \bz^{n,t+1}, z^{n,0}$} \;
    $t\gets t+1$, $k \gets k + 1$ \;
 }\label{line:restart-or-termination}
  \textbf{restart the outer loop.} $z^{n+1,0}\gets \zc^{n,t}$, $\hat \eta^{n+1,0} \gets \hat \eta^{n,t}$, $n\gets n+1$ \;
 \label{alg:primal-weight-updates} $\omega^{n} \gets $\PrimalWeightUpdate{$z^{n,0}, z^{n-1,0}, \omega^{n-1}$} \;
 }
 {\bf Output:} $z^{n,0}$.
 \caption{PDLP (after preconditioning and presolve)}
 \label{alg:practical-algorithm}
\end{algorithm}

\begin{algorithm}
\SetAlgoLined
\Fn{\AdaptiveStepOfPDHG($z^{n, t}$, $\omega^n$, $\hat \eta^{n,t}, k$)}{
$(x, y) \gets z^{n, t}$, $\eta \gets \hat \eta^{n,t}$ \;
 \For{$i=1,\dots,\infty$}{
   $x' \gets \proj_{\mathcal{X}}(x - \frac{\eta}{\omega^{n}} (c - A^\top y))$ \;
   $y' \gets y -  \eta \omega^{n} A(2x' - x) - \eta \omega^{n}\proj_{[-\uc, -\lc]}\left( (\eta \omega^{n})^{-1}y -  A(2x' - x)  \right) $\;
   $\bar \eta \gets \begin{cases}
    \frac{\| (x'-x, y'-y) \|_{\omega^{n}}^2 }{2(y'-y)^\T A(x'-x)} & 2(y'-y)^\T A(x'-x) > 0  \\
    \infty & \text{otherwise}
   \end{cases}$ \; 
   $\eta' \gets \min\left(
      (1 - (k + 1)^{-0.3}) \bar \eta,
      ( 1 + (k + 1 )^{-0.6}) \eta\right)$ \;
   \If{$\eta \le \bar \eta$}{
     \Return{$(x', y')$, $\eta$, $\eta'$} 
   }
   $\eta \gets \eta'$ \;
 }
}
 \caption{One step of PDHG using our step size heuristic}
 \label{alg:step-size}
\end{algorithm}

\paragraph{Step size choice.}\label{sec:step-size-choice-body}
The convergence analysis \cite[Equation (15)]{chambolle2016ergodic}
of standard PDHG~\eqref{eq:specialized-pdhg}
 relies on a small constant step size
\begin{equation}
      \eta \le \frac{\| z^{k+1}-z^{k} \|_\omega^2}{2(y^{k+1}-y^k)^\T A(x^{k+1}-x^{k})} \label{eq:acceptableStep}
\end{equation}
where $z^{k} = (x^{k}, y^{k})$.
Classically one would ensure \eqref{eq:acceptableStep} by picking $\eta = \frac{1}{\|A\|_2}$. This is overly pessimistic and requires estimation of $\|A\|_2$. Instead our \AdaptiveStepOfPDHG adjusts $\eta$ dynamically to ensure that \eqref{eq:acceptableStep} is satisfied. If \eqref{eq:acceptableStep} isn't satisfied, we abort the step; i.e., we  reduce $\eta$, and try again. If \eqref{eq:acceptableStep} is satisfied we accept the step. This is described in Algorithm~\ref{alg:step-size}.
Note that in Algorithm~\ref{alg:step-size} $\bar \eta \ge \frac{1}{\|A\|_2}$ holds always, and from this one can show the step size $\eta$ satisfies $\eta \ge \frac{1 - o(1)}{\|A\|_2}$ as $k$ tends to infinity.

This step size is compared against other standard step size schemes in Appendix~C.1 of \citet{applegate2021practical}.

\paragraph{Adaptive restarts.}\label{sec:adaptive-restarts}
In PDLP, we adaptively restart the PDHG algorithm at each outer iteration.
To decide when to restart at the $n$th outer iteration we use the normalized duality gap at $z$, which for any radius $r \in (0,\infty)$, is defined by
$$
\rho_{r}^n(z) := \frac{1}{r} \max_{(\hat{x},\hat{y}) \in \{  \hat{z} \in Z : \| \hat{z} - z \|_{\omega^{n}} \le r \}}\{ \Lag(x, \hat{y}) - \Lag(\hat{x}, y)\}
$$
where $Z = \cX \times \cY$ and for completeness define $\rho_0(z) := \limsup_{r \rightarrow 0^{+}} \rho_{r}^n(z)$.
This measure was introduced by \citet{applegate2023faster}.  Unlike the standard duality gap,
$
\max_{(\hat{x},\hat{y}) \in Z} \{ \Lag(x, \hat{y}) - \Lag(\hat{x}, y)\},
$
the normalized duality gap is always a finite quantity. Furthermore, for any value of $r$ and $\omega^n$, the normalized duality gap $\rho_r^n(z)$ is $0$ if and only if the solution $z$ is an optimal solution to \eqref{eq:primal-dual} \cite{applegate2023faster}; thus, it provides a metric for measuring progress towards the optimal solution. We compute the normalized duality gap based on the linear time implementation given by \citet[Section 3.2]{applegate2023faster}. For brevity, define $\mu_n(z, z_{\text{ref}})$ as the normalized duality gap at $z$ with radius $\| z -  z_{\text{ref}} \|_{\omega^{n}}$, i.e.,
$$
\mu_n(z, z_{\text{ref}}) := \rho_{\| z -  z_{\text{ref}} \|_{\omega^{n}}}^n(z) ,
$$
where $z_{\text{ref}}$ is a user-chosen reference point.

\emph{Choosing the restart candidate.}
To choose the restart candidate $\zc^{n,t+1}$ we call %
$$
\text{\GetRestartCandidate{$z^{n,t+1}, \bz^{n,t+1}, z^{n,0}$}} := \begin{cases}
z^{n,t+1} & \mu_{n}(z^{n,t+1}, z^{n,0}) < \mu_n(\bz^{n,t+1}, z^{n,0}) \\
\bz^{n,t+1} & \text{otherwise}  \ .
\end{cases}
$$
This choice is justified in Remark~5 of \cite{applegate2023faster}.

\emph{Restart criteria.} We define three parameters: $\beta_{\text{necessary}} \in (0,1)$, $\beta_{\text{sufficient}} \in (0, \beta_{\text{necessary}})$ and $\beta_{\text{artificial}} \in (0,1)$. In PDLP we use $\beta_{\text{necessary}} = 0.9$, $\beta_{\text{sufficient}} = 0.1$, and $\beta_{\text{artificial}} = 0.5$. The algorithm restarts if one of three conditions holds:

\begin{itemize}[leftmargin=.7cm]
    \item[(i)] (\textbf{Sufficient decay in normalized duality gap})
$
\mu_n(\zc^{n,t+1}, z^{n,0}) \le \beta_{\text{sufficient}} \mu_n(z^{n,0}, z^{n-1,0}) \ ,
$
    \item[(ii)] (\textbf{Necessary decay + no local progress in normalized duality gap}) 
    \begin{equation*}
        \mu_n(\zc^{n,t+1}, z^{n,0}) \le \beta_{\text{necessary}} \mu_n(z^{n,0}, z^{n-1,0}) \quad \text{and} \quad \mu_n(\zc^{n,t+1}, z^{n,0}) > \mu_n(z^{n,t}_c, z^{n,0}) \ ,
    \end{equation*}
        \item[(iii)]  (\textbf{Long inner loop})
    $t \ge \beta_{\text{artificial}} k\ .$
\end{itemize}
The motivation for (i) is presented in \cite{applegate2023faster}; it guarantees the linear convergence of restarted PDHG on LP problems. 
The second condition in (ii) is inspired by adaptive restart schemes for accelerated gradient descent where restarts are triggered if the function value increases \cite{o2015adaptive}.
The first inequality in (ii) provides a safeguard for the second one, preventing the algorithm from restarting every inner iteration or never restarting.
The motivation for (iii) relates to the primal weight updates. In particular, primal weight updates only occur after a restart (see Line~\ref{alg:primal-weight-updates} of Algorithm~\ref{alg:practical-algorithm}), and condition (iii) ensures that the primal weight will be updated infinitely often.
This prevents a bad choice of primal weight in earlier iterations from causing progress to stall for a long time.

The practical improvement from our restart scheme is demonstrated in Appendix~C.2 of \citet{applegate2021practical}.

\paragraph{Primal weight updates.}
Algorithm~\ref{alg:primal-weight-update} aims to choose the primal weight $\omega^{n}$ such that distance to optimality from primal and dual iterates is the same, i.e.,
$\| (x^{n,t} - \xStar, \mathbf{0}) \|_{\omega^n} \approx \| (\mathbf{0}, y^{n,t} - \yStar) \|_{\omega^n}$. By definition of $\| \cdot \|_{\omega}$,
$$
\| (x^{n,t} - \xStar, \mathbf{0}) \|_{\omega^n} = \sqrt{\omega^n} \| x^{n,t} - \xStar \|_2, \quad \| (\mathbf{0}, y^{n,t} - \yStar) \|_{\omega^n} = \frac{1}{\sqrt{\omega^n}} \| y^{n,t} - \yStar \|_2.
$$
Setting these two terms equal yields
$\omega^n = \frac{\| y^{n,t} - \yStar \|_2}{\| x^{n,t} - \xStar \|_2}$.
Evidently, the quantity $\frac{\| y^{n,t} - \yStar \|_2}{\| x^{n,t} - \xStar \|_2}$ is unknown beforehand, but we attempt to estimate it using $\Delta^n_y / \Delta^n_x$. However, the quantity $\Delta^n_y / \Delta^n_x$ can change wildly from one restart to another, causing $\omega^n$ to oscillate. To dampen variations in $\omega^n$, we first move to a log-scale, in which the primal weight is symmetric, i.e., $\log(1 / \omega^n) = -\log(\omega^n)$, and perform an exponential smoothing with parameter $\theta \in [0,1]$. In PDLP, we use $\theta = 0.5$. 
The parameter $\epsZero = 10^{-10}$ is a small nonzero tolerance that, in principle, could be adjusted.
It helps prevent numerical issues or other performance issues occurring from very huge or 
very small movement of the iterates resulting in a very dramatic change in the primal weights.

\begin{algorithm}
\caption{Primal weight update}\label{alg:primal-weight-update}
\SetAlgoLined
\Fn{\PrimalWeightUpdate{$z^{n,0}, z^{n-1,0}, \omega^{n-1}$}}{
$\Delta_x^n = \| x^{n,0} - x^{n-1,0} \|_2, \quad \Delta_y^n = \| y^{n,0} - y^{n-1,0} \|_2$ \;
\uIf{$\Delta_x^n \in (\epsZero,\epsZero^{-1})$ and $\Delta_y^n \in (\epsZero,\epsZero^{-1})$}{
\Return{ $\exp\left(\theta \log\left( \frac{\Delta_y^n}{\Delta_x^n} \right) + (1 - \theta) \log(\omega^{n-1}) \right)$}
}
\Else{
\Return{$\omega^{n-1}$} \;
}
}
\end{algorithm}

\paragraph{Initialization of the primal weights.}
Define the map \texttt{Combine}$(\lc, \uc)$ which outputs a vector in $\mathbb{R}^{m}$ whose $i$th coordinate is given by
$\max\left\{0, (\lc)_{i}^\star, (\uc)_{i}^\star\right\}$
where for any $x \in \mathbb{R} \cup \{\pm \infty\}$, the operation $(x)^{\star}$ is equal to $|x|$ if it is finite or equal to zero otherwise.
The primal weight is initialized using
\begin{flalign*}
  \InitializePrimalWeight(c, \lc, \uc) :=
  \begin{cases} \frac{\| c \|_2}{\| \texttt{Combine}(\lc, \uc) \|_2} & \| c \|_2 > 0.0 \text{ and } \| \texttt{Combine}(\lc, \uc) \|_2 > 0.0 \\
1 & \text{otherwise}.
\end{cases}
\end{flalign*}

There are several important differences between our primal weight heuristic and proposals from Goldstein et al.~\cite{goldstein2013adaptive,goldstein2015adaptive}. For example, Goldstein et al.~make relatively small changes to the primal weights at each iteration, attempting to balance the primal and dual residual. These changes have to be diminishingly small because, in our experience, PDHG may be unstable if they are too big.  In contrast, in our method, the primal weight is only updated during restarts, which in practice allows for much larger changes without instability issues. Moreover, our scheme tries to balance the weighted distance traveled by the primal and dual iterates rather than the residuals.

The practical improvement from our primal weight scheme is demonstrated in Appendix~C.3 of \citet{applegate2021practical}.

\paragraph{Diagonal Preconditioning.}\label{sec:diag}
Preconditioning is a popular heuristic in  optimization for improving the convergence of first-order methods.
To avoid factorizations, we only consider diagonal preconditioners. Our goal is to rescale the constraint matrix $A$ to $\tA=D_1 A D_2$ with positive diagonal matrices $D_1$ and $D_2$, so that the resulting matrix $\tA$ is ``well balanced''. Such preconditioning creates a new LP instance that replaces $A, c, (\lc, \uc),$ and $ (\lv, \uv)$ in \eqref{eq:lp} with
$\tA$, $\ttx = D_2^{-1} x$, $\tc=D_2 c$, $(\tlc,\tuc)= D_1 (\lc,\uc)$, and $(\tlv, \tuv) = D_2^{-1} (\lv, \uv).$
Common choices for $D_1$ and $D_2$ include:
\begin{itemize}[leftmargin=*]
    \item \textbf{Pock-Chambolle \cite{pock2011diagonal}:} Pock and Chambolle proposed a family of diagonal preconditioners\footnote{Diagonal preconditioning is equivalent to changing to a weighted $\ell_2$ norm in the proximal step of PDHG (weight defined by $D_2$ and $D_1$ for the primal and dual respectively). Pock and Chambolle use this weighted norm perspective.} for PDHG parameterized by $\alpha$, where the diagonal matrices are defined by $(D_1)_{jj}=1/\sqrt{\|A_{j,\cdot}\|_{2-\alpha}}$ for $j=1,...,m$ and $(D_2)_{ii}=1/\sqrt{\|A_{\cdot,i}\|_{\alpha}}$ for $i=1,...,n$. We use $\alpha=1$ in PDLP. This is the baseline diagonal preconditioner in the PDHG literature.
  \item \textbf{Ruiz~\cite{ruiz2001scaling}:} %
        In an iteration of Ruiz scaling, the diagonal matrices are defined as $(D_1)_{jj}=1/\sqrt{\|A_{j,\cdot}\|_{\infty}}$ for $j=1,...,m$ and $(D_2)_{ii}=1/\sqrt{\|A_{\cdot,i}\|_{\infty}}$ for $i=1,...,n$. Ruiz \cite{ruiz2001scaling} shows that if this rescaling is applied iteratively, the infinity norm of each row and each column converge to $1$.
\end{itemize}
For the default PDLP settings, we apply a combination of Ruiz rescaling~\cite{ruiz2001scaling} and the preconditioning technique proposed by Pock and Chambolle \cite{pock2011diagonal}. In particular, we apply $10$ iterations of Ruiz scaling and then apply the Pock-Chambolle scaling. 
Appendix C.5 of \citet{applegate2021practical} shows that PDLP's diagonal preconditioning outperforms either Pock-Chambolle or Ruiz individually.

\paragraph{Infeasibility detection.} Inspired by \cite{applegate2024infeasibility}, PDLP periodically checks whether any of three sequences---the difference of consecutive iterates, the normalized iterates, or the normalized running average---can serve as approximate certificates of infeasibility. We evaluate running averages starting from the most recent restart.
This allows PDLP to detect infeasibility with little additional overhead.
 Notably, verifying all three sequences is often faster than relying on just one, since their convergence speed varies with the problem’s geometry \cite{applegate2024infeasibility}.

\paragraph{Theoretical guarantees for the above enhancements.}
\label{sec:guarantees}
While PDLP's enhancements are motivated by theory, some of them may not preserve theoretical guarantees: we do not have a proof of convergence for the adaptive step size rule or primal weight update scheme.
However, Applegate et al. \cite{applegate2023faster} showed in a simpler setting with fixed primal weight and step size that adaptive restarts preserve convergence guarantees. Similarly, the theoretical guarantees for infeasibility detection \cite{applegate2024infeasibility} do not account for restarts.

\section{Quickly finding approximately optimal but feasible solutions}
\label{sec:feasibility}

In integer programming, approximately optimal but feasible solutions are typically acceptable. 
For example, branch-and-bound methods are commonly stopped as soon as the optimality gap falls below a predefined 
threshold (e.g., 1\%) to avoid excessive computation time while still delivering high-quality solutions \cite{gurobi_mipgap}.
This pragmatic termination criterion motivates 
a new heuristic we develop, called feasibility polishing, that enables first-order methods 
to similarly find solutions with extremely small feasibility violations and moderate (e.g., 1\%) duality gaps.

There are two key insights behind our heuristic:
\begin{enumerate}
\item \textbf{Restarted PDHG tends to converge faster for feasibility problems than for optimality problems.} By a feasibility problem, we mean a problem with no objective, i.e.,
\begin{equation}\label{primal-lp-feasibility}
    \begin{aligned}[c]
    \minimize_{x\in \R^n}~~ &~ 0 & \\
\text{subject to:}~~ &~ \lc \leq Ax \leq \uc \\
& ~ \lv \leq x \leq  \uv.
    \end{aligned}
\end{equation}
Empirically, these problems can be orders of magnitude faster for PDHG to solve compared to the original linear program (with nonzero objective).
\item \textbf{Given a starting solution, restarted PDHG converges to a \emph{nearby} optimal solution.} PDHG has the property that the distance (in terms of the PDHG norm) from the iterates to any optimal solution is nonincreasing \cite{he2014convergence}. 
The same property also holds for restarted PDHG (e.g., see Proposition~9 of \citet{applegate2023faster}).
Consequently, one can show, 
if we warm-start restarted PDHG on \eqref{primal-lp-feasibility} from a primal solution obtained by applying PDLP to the original problem (and set the starting dual iterate to zero, which is trivially dual feasible for \eqref{primal-lp-feasibility}), then restarted PDHG will 
find a nearby primal feasible solution.

Thus, if we start from a near-optimal solution (i.e., from PDLP) as a warm-start for restarted PDHG on the primal feasibility problem \eqref{primal-lp-feasibility}, then the result will be a primal solution obtaining a similar objective value which satisfies tight feasibility tolerances.
This idea is visualized in Figure~\ref{fig:visualize-feasibility-polishing}.

\end{enumerate}

\begin{figure}
\includegraphics[width=\textwidth]{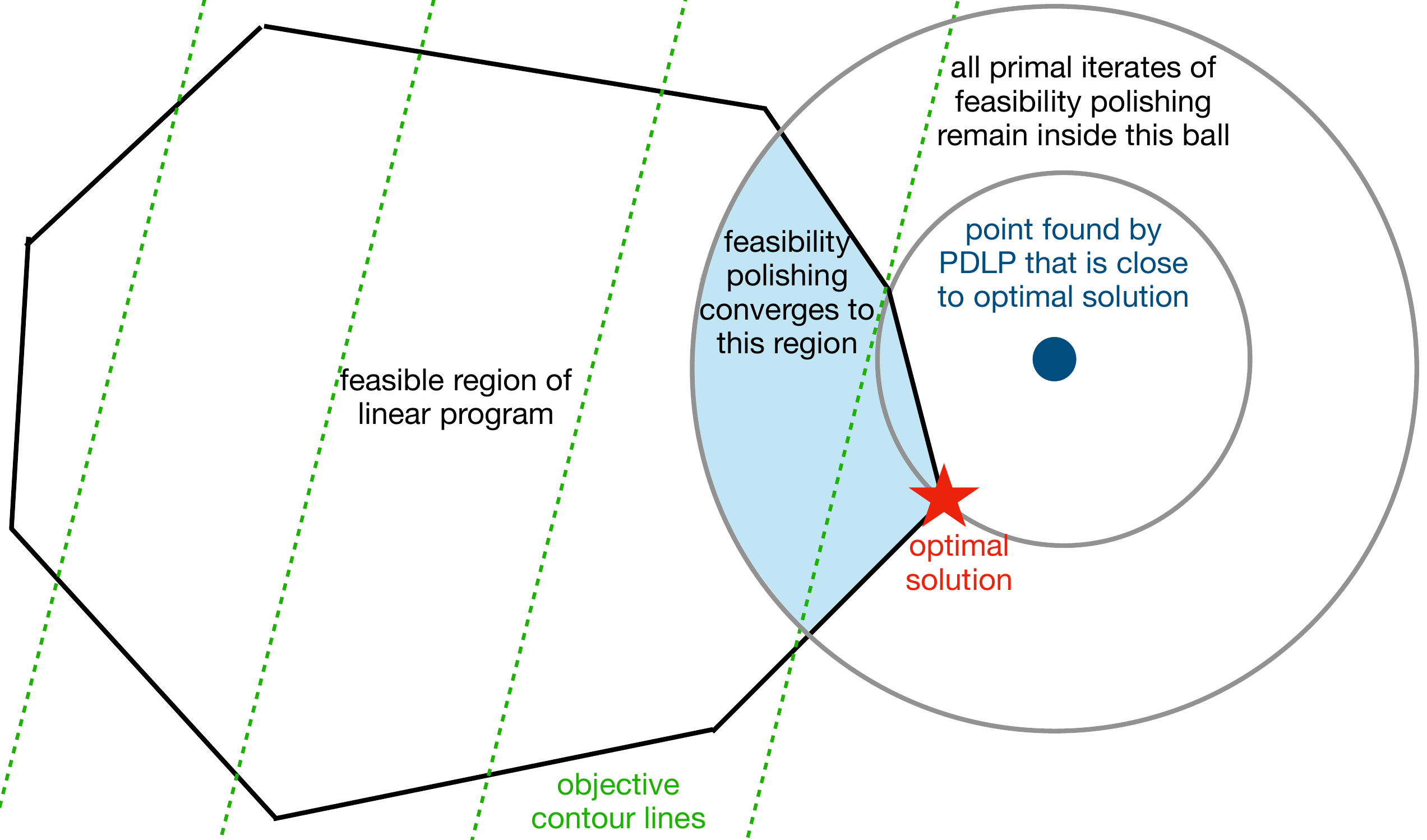}
\caption{Visualization of feasibility polishing.}\label{fig:visualize-feasibility-polishing}
\end{figure}

We can repeat this heuristic argument on the dual feasibility problem
\begin{equation}\label{dual-lp-feasibility}
    \begin{aligned}[c]
\maximize_{y\in \R^{m}, r \in\R^{n}} \quad &  0 \\
\text{subject to:}\quad & c - A^\T y = r \\
& y \in \cY \\
& r \in \cR.
    \end{aligned}
\end{equation}
to obtain a primal and dual solution that are almost exactly feasible with a small, but relatively larger, optimality gap.

Appendix~\ref{app:feas-polishing-basis} gives a theoretical basis for these two ideas. 
In particular, we show that PDHG with a fixed-frequency restart scheme and fixed step size can achieve an $O(\beta / \hoff{A} \log(1/\epsilon))$ iteration bound
on LP instances, where $\beta$ is the largest singular value of the constraint matrix $A$ and $\hoff{A}$ is its Hoffman constant \cite{hoffman_1952}.
In other words, the new theory shows a runtime bound that is constant and independent of the right-hand side or objective vector. 
In contrast, \cite{applegate2023faster} shows an $O(\beta / \hoff{K}  \log(1/\epsilon))$ iteration bound for restarted PDHG on general linear programs, where $K$ is a matrix containing the $A$ matrix, objective vector, and right-hand side.
Moreover, some dependence on the right-hand side and/or objective vector is fundamental to restarted PDHG: indeed, there are examples where $\hoff{A}=1$ but restarted PDHG is slow to converge (see \cite[Table 1]{hinder2024worst}). We also show that our polishing algorithm will converge to a nearby feasible solution.

We note that similar arguments could be applied to other first-order methods, for example, replacing PDHG with AGD and minimizing $\| A x - b \|^2$ s.t. $x \ge 0$ or using an adaptive restart scheme (following the techniques of \cite{applegate2023faster}). 
As our main focus is computational, for simplicity we analyze the setting of PDHG with fixed frequency restarts.

These two insights are employed to design Algorithm~\ref{alg:three-phase}. 
In particular, Algorithm~\ref{alg:three-phase} strives for a solution that satisfies extremely tight feasibility tolerances (e.g., $10^{-8}$ relative tolerances) but with a moderately large user-defined tolerance $\epsilon_{\text{rel-gap}} > 0$ on the relative duality gap, i.e.,
\begin{flalign}\label{eq:relative-gap}
\textsc{relative-gap} := \frac{\abs{c^{\T} x + p(-y; \lc, \uc) + p(-r ; \lv, \uv)}}{\abs{c^{\T} x} + \abs{p(-y; \lc, \uc) + p(-r ; \lv, \uv)}}  \le \epsilon_{\text{rel-gap}},
\end{flalign}
e.g., with $\epsilon_{\text{rel-gap}} = 10^{-2}$.
While this is a somewhat nonstandard relative gap definition (i.e., more typical would be a max over the two terms in the denominator), 
we use this choice for legacy reasons.

\begin{algorithm}[!htb]
\caption{Feasibility polishing algorithm.}\label{alg:three-phase}
\begin{enumerate}
\item \label{feasibility-polishing-step-normal} Run PDLP on the LP instance. After completing $k$ iterations, for $k = 100, 200, 400, 800, \dots$, if the relative duality gap is less than $\epsilon_{\text{rel-gap}}$, pause ``normal'' PDLP iterations. Let $(x^k, y^k)$ be the average iterate (since the last restart) after that $k$-th iteration.
\item \label{feasibility-polishing-step-primal} Run PDLP on the primal feasibility problem, i.e., \Cref{primal-lp-feasibility}, starting from $(x^k, 0)$ with the initial primal weight and step size equal to the current values from Step \ref{feasibility-polishing-step-normal}. Run for $k/8$ iterations or until the primal constraint violation is less than the termination tolerance. In the former case, return to Step \ref{feasibility-polishing-step-normal}, continuing ``normal'' PDLP iterations. Otherwise, let $\tilde{x}$ be the output primal solution.
\item \label{feasibility-polishing-step-dual} Run PDLP on the dual feasibility problem, i.e., \Cref{dual-lp-feasibility}, starting from $(0, y^k)$ with the initial primal weight and step size equal to the current values from Step \ref{feasibility-polishing-step-normal}. Run for $k/8$ iterations or until the dual constraint violation is less than the termination tolerance. Let $\tilde{y}$ be the output dual solution.
\item Check if $(\tilde{x}, \tilde{y})$ meets the termination criteria; if not, return to Step \ref{feasibility-polishing-step-normal}, resuming ``normal'' PDLP iterations.
\end{enumerate}
Note that operations that are triggered periodically based on iteration counts (such as termination checks, adaptive restarts --- see the paragraph ``Periodic checks of restarts and termination criteria'' in Section~\ref{sec:practical-algorithmic-improvements}) take place in Steps \ref{feasibility-polishing-step-normal}, \ref{feasibility-polishing-step-primal}, and \ref{feasibility-polishing-step-dual} using the iteration count  associated with the current Step.
\end{algorithm}

\section{Implementation}\label{sec:implementation}

The PDLP implementation we refer to in this work is a C++ implementation
that is distributed
as part of Google's open-source OR-Tools library. This implementation
of PDLP was first released with OR-Tools version 9.3 in March 2022. In this section,
we briefly discuss the motivations and notable differences between this code and the Julia-based
prototype used in the conference version of this work.

The primary motivation for a C++ implementation was the desire to 
integrate PDLP into production applications at Google, where concerns
about technical uniformity make C++ more practical to deploy than Julia.
Secondarily, the new implementation provided an opportunity to improve
efficiency and generality, given that the Julia implementation was intended as
a research prototype.

In addition to its improved accessibility thanks to its integration within both
the OR-Tools framework and external modeling systems such as CVXPY~\cite{cvxpy},
the C++ version has the following notable differences over the Julia code:

\paragraph{Feasibility polishing.} The Julia code did not have a feasibility polishing routine.

\paragraph{Multithreading.} The implementation parallelizes the computation of projections, norms, distances, and sparse matrix-dense vector multiplications by the constraint matrix $x \mapsto A x$ and its transpose $y \mapsto A^{\top} y.$ Any fixed number of threads is supported. To parallelize dense vector and vector-vector operations, it divides the vector into (approximately) equal sized shards, and performs each shard's operations independently in parallel (using Eigen for the per-shard operation). To parallelize sparse matrix-dense vector multiplications, it divides the sparse matrix by rows into shards with (approximately) the same number of nonzeros, and performs each shard's product independently in parallel (using Eigen's sparse matrix-dense vector product for the per-shard operation). Each shard is then handled independently and then combined\footnote{The algorithm is deterministic given a fixed number of shards. In each threaded operation, the per-shard operations are fully independent, so execution order doesn't matter, and the per-shard results are processed sequentially (and deterministically). Changing the number of shards changes the order of the floating-point operations, and can result in different round-off errors.}. By default, the number of shards is taken to be four times the number of threads.

\paragraph{More general LP form.} As mentioned in Section~\ref{sec:pdhg}, we consider a more general LP format, commonly adopted by LP solvers, where inequalities are permitted to have both lower and upper bounds. Equalities are specified by setting the lower bounds equal to the upper bounds. The C++ code supports this formulation, in contrast to the Julia code that
supports only single-sided inequalities and equalities.

\paragraph{Presolve.} The C++ implementation offers integrated presolving, in contrast with the conference version~\cite{applegate2021practical} where, for research purposes, we manually applied an external presolver. We did not develop new presolving routines; rather, we reused them from GLOP, the simplex-based LP solver included in OR-Tools.
As a result of reusing GLOP's presolving, the presolving code is single-threaded and not necessarily suitable for the largest scale of instances that PDLP is otherwise capable of handling.
Thus, due to the size of the instances in the numerical experiments, we do not enable presolve in this work.

\section{A New Collection of Large-Scale Instances}\label{sec:instances}

As part of this work, we developed and released a new test suite of large-scale LP instances. The set of problems is designed to be challenging for, if not beyond the capabilities of, today's state-of-the-art solvers, yet small enough to fit in memory of top-end machines. The smallest problem in our new test suite has more than 125 million nonzeros, while the largest has over 6.3 billion. The collection is intended to be both realistic and diverse, but we also acknowledge that this collection is not comprehensive.
The eleven LP instances are detailed in \Cref{table:problem-instances} and \Cref{table:problem-sizes}. Many of these problem instances were generated by the authors for the purposes of this project. There are few publicly available test instances of such a large scale, and it is our hope that these instances will be useful for other researchers.

\begin{table}[!tbh]
    \small
\begin{tabular}{p{3.4cm} p{8.7cm} p{3cm}}
\toprule
Instance name & Brief description & Full details available? \\
\midrule
design-match & \noindent Randomly generated statistical matching problem with covariate balancing constraints for making causal inference on observational data. Loosely inspired by \citet{zubizarreta2012using}. & Yes \\
tsp-gaia-10m & An LP lower bound for a travelling salesman problem. The underlying TSP instance \cite{gaia-10m-tsp} is the 10 million stars nearest to Earth in the Gaia Data Release 2 star catalog \cite{gaia-dr2_2018}. The variables correspond to 60,601,996 edges heuristically chosen to minimize the reduced-cost penalty when pricing the edges from the complete graph. The constraints are the 10,000,000 degree constraints and 7,016,681 cutting planes collected from extensive runs of the Concorde TSP solver \cite{concorde-tsp-solver} on sub-instances. & No \\
tsp-gaia-100m & Same as tsp-gaia-10m, except using the 100 million stars nearest to Earth \cite{gaia-100m-tsp}, with 1,184,557,727 edges, 100,000,000 degree constraints, and 62,934,799 cutting planes. & No \\
heat-source-easy & This problem considers a material with several randomly located heat sources. 
Given temperature measurements at various parts of the material and a list of possible locations of the heat sources, the aim is 
to recover the temperature distribution of the material. This is a PDE constrained optimization problem with linear partial differential equation constraints.  & Yes \\
heat-source-hard & Same as heat-source-easy but significantly fewer measurement locations, more heat sources and more possible heat source locations. This makes the problem harder from a recovery standpoint. Moreover, we find this problem is much harder for first-order methods than heat-source-easy. & Yes \\
production-inventory & A robust production-inventory problem \cite{ben-tal_2004}. Data is randomly generated. & Yes \\
qap-tho-150 & An LP relaxation \cite{adams_johnson_1994} of the tho150 quadratic assignment problem from QAPLIB \cite{qaplib}. Data is randomly generated. & Yes \\
qap-wil-100 & An LP relaxation \cite{adams_johnson_1994} of the wil100 quadratic assignment problem from QAPLIB \cite{qaplib}. Data is randomly generated. & Yes \\
world-shipping & An LP relaxation of a MIP formulation of the LINERLIB \cite{linerlib} World instance. & No \\
mediterranean-shipping & An LP relaxation of a MIP formulation of the LINERLIB \cite{linerlib} Mediterranean instance. & No \\
supply-chain & A multicommodity flow problem motivated by optimizing the supply chain of a large retailer. Data is randomly generated. & Yes \\
\bottomrule
\end{tabular}
\caption{Descriptions of the LP problem instances.}\label{table:problem-instances}
\end{table}

\begin{table}[!tbh]
	\small
	\center
	\begin{tabular}{l r r r}
		\toprule
		problem name & variables & constraints & nonzeros \\
		\midrule
		design-match & 40,000,000 & 22,000,135 & 2,760,000,000 \\
		tsp-gaia-10m & 60,601,996 & 17,016,681 & 475,701,996 \\
		tsp-gaia-100m & 1,184,557,727 & 162,934,799 & 6,337,834,450 \\
		heat-source-easy & 31,628,008 & 15,625,000 & 125,000,000 \\
		heat-source-hard & 31,628,008 &15,625,000& 125,000,000 \\
		production-inventory & 18,270,600 & 4,650,850 & 500,049,700 \\
		qap-tho-150 & 249,783,750 & 6,705,300 & 1,005,795,000 \\
		qap-wil-100 & 49,015,000 & 1,980,200 & 198,020,000 \\
		world-shipping & 228,867,510 & 15,304,282 & 688,658,522 \\
		mediterranean-shipping & 208,479,461 & 7,490,593 & 628,927,462 \\
		supply-chain & 201,000,100 & 2,210,100 & 403,000,100 \\
		\bottomrule
	\end{tabular}
	\caption{Problem instance sizes.}\label{table:problem-sizes}
\end{table}

All these instances were rescaled such that
\begin{enumerate}[label=(\roman*)]
	\item The objective vector has values $-1$, $0$, or $1$.
	\item The right-hand side has values $-\infty$, $-1$, $0$, $1$, or $\infty$, except in the case that a constraint has two nonzero, finite, 
	nonmatching lower and upper bounds. In this case, we rescale such that the maximum absolute value of the right hand side is one.
\end{enumerate}
This rescaling is implemented in the \textit{rescale\_instance} function in \href{https://github.com/ohinder/large-scale-LP-test-problems/blob/main/utils.jl}{utils.jl}.

These instances (after scaling) are available for download at
\[ 
\text{\url{https://www.oliverhinder.com/large-scale-lp-problems}}
\]
and some have generators available along with detailed descriptions of the problems in the repository 
\[
\text{\url{https://github.com/ohinder/large-scale-LP-test-problems}}
\]
as indicated in the right column of \Cref{table:problem-instances}.
Although, all the instances in this \emph{repository} are randomly generated (i.e., those marked with full details available),
great effort was made by the authors to make this generation as realistic as possible, 
coupled with clearly documented reasoning.

\section{Numerical experiments}\label{sec:numerical-results}

In this section we present numerical results on a new test suite of large-scale LP instances that we have developed and released as part of this work. Our goal is to show that first-order solvers, in particular PDLP with feasibility polishing, can provide feasible solutions to tight tolerances that are close-to-optimal, in reasonable time and without requiring enormous amounts of memory. We will compare PDLP with and without feasibility polishing to three commercial state-of-the-art Gurobi solvers: a barrier (i.e., interior point) method, and primal and dual simplex methods. 

We also provide some supplementary experiments in \Cref{sec:additional-numerical-experiements} on smaller test problems. These experiments (i) demonstrate 
the value of feasibility polishing on the Mittelmann benchmark and report the performance of PDLP in detecting infeasibility on a standard collection of primal infeasible problems. 
Not surprisingly we find that Gurobi is much faster than PDLP on most of these relatively small instances.

Detailed CSVs of the results and log files for all experiments can also be found at:
\[
\text{\url{https://www.oliverhinder.com/large-scale-lp-problems}}.
\]
We use the PDLP implementation in OR-Tools version 9.10.
Instructions on how to run PDLP to reproduce our results can be found at 
\[
\text{\url{https://github.com/ohinder/large-scale-LP-test-problems/blob/main/SOLVER-INSTRUCTIONS.md}.}
\]\noindent Note: we do not compare against other first-order methods or GPU implementations.
We do not compare against first-order methods, because in the conference version of this
paper \cite{applegate2021practical} we already tested against SCS, a state-of-the-art first-order method.
In these tests, PDLP was $19\times$ faster on NETLIB \cite[Table 24]{applegate2021practical} and $6.3\times$ faster on MIPLIB 2017 relaxations, and thus we do not anticipate it being competitive in our tests.
We do not compare against GPU implementations because
the release of the PDLP C++ code in 2022 pre-dated the GPU implementation of PDLP \cite{lu2025cupdlp}. Additionally, some of the instances that we test exceed the RAM limit of a single GPU.
We expect that a GPU implementation would outperform the OR-Tools code in wall clock time for instances that fit in the GPU's memory.

\paragraph{Machines used} We used the Pitt computing cluster \cite{pitt_crc} to perform our experiments. There were two types of machines that we used: (i) an AMD EPYC 7302 (Rome) processor with 16 cores (32 threads) and 256 GB of memory and (ii) an Intel Xeon Platinum 8352Y (Ice Lake) processor with 32 cores (64 threads) and 1 TB of memory. The high memory machine used two and a half times more computing credits per hour so to limit its use we first ran a solver on an instance on the machine with 256 GB of memory and then only switched to the 1 TB machine if insufficient memory was available. We estimate that our experiments would have cost around \$20,000 to run on Google cloud (excluding the cost of building and rescaling the instances). We ran PDLP with 16 threads on the 256 GB machine and 32 threads on the 1 TB machine.
For Gurobi, we left the thread parameter at its default value, which means that it will automatically use as many threads as the machine has up to the number of virtual processors available in the machine \cite{gurobi_threads}. 

\paragraph{Multithreading performance of PDLP} From Figure~\ref{fig-speed-up} we can see that PDLP enjoys significant speed-up from more threads on large instances. With $32$ threads PDLP has an almost $8 \times$ speed-up on the design-match instance, one of the largest in the set. This speed-up  comes from parallelizing the sparse matrix-vector operation and vector-vector operations. For smaller instances, the overall speed-up can be much less, in fact, more threads can even be counterproductive \cite[Table 8]{lu2025cupdlp}. We have identified two areas to improve our multithreading performance: thread synchronization and inefficient use of memory bandwidth. Implementing ideas from the literature on multithreaded sparse matrix-vector (SpMV) operations (e.g., \cite{bulucc2011reduced,karsavuran2015locality}) would lead to further improvements.

\begin{figure}[!tbh]
\center
\includegraphics[width=10cm]{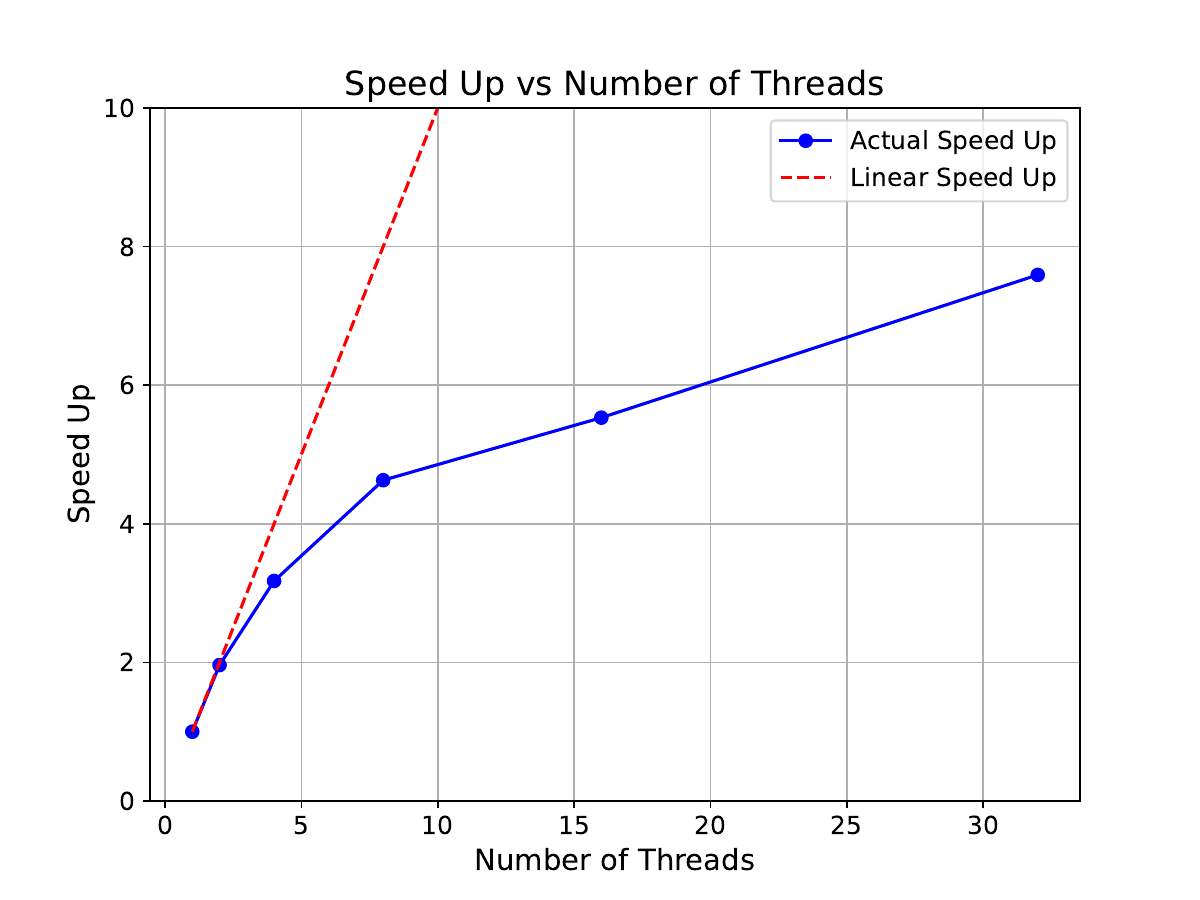}
\caption{Speed up versus number of threads for PDLP on the design-match 
linear program on an AMD EPYC 7302 (Rome) processor with 16 cores (32 threads).
We recorded the wall clock time to run 10,000 iterations with feasibility polishing disabled (1 measurement per number of threads).}\label{fig-speed-up}
\end{figure}

\paragraph{Gurobi} We use Gurobi version 11.0.2 with crossover disabled.

\paragraph{Termination criteria}
To test feasibility polishing, our experiments do 
not use the default PDLP termination criteria. Instead,
we use a tighter criterion for feasibility and a looser criterion for 
the optimality gap. In particular, in our experiments PDLP terminates when the maximum $\ell_{\infty}$ norms of the primal 
and dual residuals are less than $10^{-8}$, i.e., $x \in \cX$, $y \in \cY$, $r \in \cR$,
\begin{flalign}\label{eq:termination-criteria}
\lc - 10^{-8} \le A x \le \uc + 10^{-8}, \quad \| c - A^\T y - r \|_{\infty} \le 10^{-8},
\end{flalign}
and the relative duality gap is less than $1\%$, i.e.,
\[
\frac{\abs{c^{\T} x + p(-y; \lc, \uc) + p(-r ; \lv, \uv)}}{\abs{c^{\T} x} + \abs{p(-y; \lc, \uc) + p(-r ; \lv, \uv)}} \le 10^{-2}.
\]
Recall, however, that all linear programs have been scaled as per \Cref{sec:instances} which impacts 
the termination criteria. Specifically, \Cref{eq:termination-criteria} \emph{translated to the original linear programs} becomes equivalent to
\begin{subequations}\label{eq:rescaled-termination-criteria}
\begin{flalign}
\lc - 10^{-8} q \le A x \le \uc + 10^{-8} q ~~\text{where}~~ q_i := \begin{cases}
\abs{(\lc)_i} & (\lc)_i \not\in \{-\infty, 0 \}, \quad (\uc)_i \in \{0, \infty \} \\
\abs{(\uc)_i} & (\lc)_i \in \{-\infty, 0 \}, \quad (\uc)_i \not\in \{0, \infty \} \\
\max\{\abs{(\lc)_i}, \abs{(\uc)_i}\} & (\lc)_i \not\in \{-\infty , 0\}, \quad (\uc)_i \not\in \{0, \infty \} \\
1 & (\lc)_i \in \{-\infty , 0\}, \quad (\uc)_i \in \{0, \infty \}
\end{cases}
\end{flalign}
and
\begin{flalign}
\abs{(c - A^\T y - r )_i} \le 10^{-8} \times \begin{cases}
\abs{c_i} & c_i \neq 0 \\
1 & \text{otherwise.}
\end{cases}
\end{flalign}
\end{subequations}
Finally, we include the infeasibility termination criteria in \Cref{sec:infeasibility-termination}.

We used the default Gurobi termination criteria, which after presolve and their own custom scaling, has an $\ell_{\infty}$ tolerance on the primal and dual residuals of $10^{-6}$, and objective gap tolerance of $10^{-8}$. Note that we do not believe that making Gurobi's termination criteria match ours would make a significant difference to the results. In particular, Gurobi's barrier implementation mostly experiences OOM errors, and so is unaffected by the termination criteria; and simplex methods only find a primal and dual feasible solution at the optimum.

\paragraph{Results} Table~\ref{table:numerical-results} shows the time in \textbf{hours} taken per instance for each method to solve the problems and whether the method required a high memory machine or ran out of memory (shading the cell red).
From this table one can see that Gurobi barrier only solves three instances because it hits memory limits in all but three instances, whereas PDLP never exceeds 1 TB of memory and only in one instance requires an excess of 256 GB.
Gurobi's primal and dual simplex is competitive with PDLP in terms of memory requirements but slow: combined primal and dual simplex only solves four instances within the time limit of 144 hours.
Finally, we see that feasibility polishing is extremely effective:  
PDLP without polishing only solves two instances, PDLP with polishing solves eight of eleven instances. 
Table~\ref{table:objectives-and-duality-gaps} shows that the relative duality gap achieved by PDLP with feasibility polishing is typically much better than the $1\%$ threshold that we targeted. In particular, in all cases it obtains a duality gap of less than $0.4\%$ and other than the two shipping instances the duality gap is always less than $0.1\%$. Of course, these duality gaps are much worse than what Gurobi would achieve if it solved the problem (recall $10^{-8}$ is the default absolute optimality gap for which Gurobi barrier terminates) but for many applications this may be satisfactory.

 \Cref{table:numerical-results}  shows that both PDLP and Gurobi had errors. Both of Gurobi's errors gave the error code 10025. We believe this is a bug in Gurobi as the Gurobi error codes \cite{gurobiErrorCodes} state that the `user has called a query routine on a model with more than 2 billion non-zero entries' even though we did not call a query routine.
 We have made Gurobi aware of this issue.
Both of PDLP's errors were numerical errors that occurred within a few hundred iterations.

\begin{table}[htbp]
\centering
\begin{tabular}{lrrrrrr}
\toprule
& \multicolumn{2}{c}{\textbf{PDLP}} & \multicolumn{3}{c}{\textbf{Gurobi}} \\
\cmidrule(lr){2-3} \cmidrule(lr){4-6}
\textbf{Instance name} & \textbf{Without polishing} & \textbf{With polishing} & \textbf{Barrier} & \textbf{Dual} & \textbf{Primal} \\
\midrule
design-match & 68.0 & \textbf{9.3} & \cellcolor{red!30}OOM & \cellcolor{red!30}75.3 & \cellcolor{red!30}32.4 \\
tsp-gaia-10m & TIME & \textbf{3.0} & \cellcolor{red!30}28.1 & 90.1 & 40.2 \\
tsp-gaia-100m & TIME\cellcolor{red!30} & \cellcolor{red!30}\textbf{21.1} & \cellcolor{red!30}OOM & \cellcolor{red!30}OOM & \cellcolor{red!30}OOM \\
heat-source-easy & TIME & \textbf{60.0} & \cellcolor{red!30}OOM & ERR & TIME \\
heat-source-hard & TIME & TIME & \cellcolor{red!30}OOM & ERR & TIME \\
production-inventory & TIME & TIME & \cellcolor{red!30}\textbf{5.8} & TIME & 43.7 \\
qap-tho-150 & ERR & ERR & \cellcolor{red!30}OOM & \cellcolor{red!30}TIME & \cellcolor{red!30}TIME \\
qap-wil-100 & 44.4 & \textbf{0.3} & \cellcolor{red!30}OOM & TIME & TIME \\
world-shipping & TIME & \textbf{53.8} & \cellcolor{red!30}OOM & TIME & TIME \\
mediterranean-shipping & TIME & \textbf{32.6} & \cellcolor{red!30}OOM & TIME & TIME \\
supply-chain & TIME & 18.9 & \textbf{2.5} & 4.2 & TIME \\
\bottomrule
\end{tabular}
\caption{Comparison of solvers in terms of the number of \textbf{hours} to solve instances
(excluding time to read the instance).
Termination criteria for Gurobi was set as default; termination criteria for PDLP was $10^{-8}$ maximum violation on primal and dual feasibility but $10^{-2}$ on the relative duality gap. 
Cells shaded red indicate runs that required the 1 TB RAM machine. TIME indicates that the solver exceeded 144 hours. OOM stands for `out of memory', meaning the solver exceeded the 1 TB RAM limit. ERR denotes that there was an error during the solve.}\label{table:numerical-results}
\end{table}

\begin{table}
\centering
\begin{tabular}{lrrl}
\toprule
\textbf{Instance name} & \textbf{Primal objective} &  \textbf{Absolute gap}  & \textbf{Relative gap} (\%)  \\
\midrule
design-match & $2.87 \times 10^{6}$ & $5.53 \times 10^{-2}$ & $0.0000001$ \\
tsp-gaia-10m & $1.3 \times 10^{8}$ & $3.01 \times 10^3$ & $0.0012$ \\
tsp-gaia-100m & $1.94 \times 10^{9}$  & $5.82 \times 10^{4}$ & $0.0015$   \\
heat-source-easy & $5.16 \times 10^{7}$ & $9.06 \times 10^{-4}$ & $0.00000000088$ \\
qap-wil-100 & $2.21 \times 10^5$ & $1.14 \times 10^{2}$ & $0.026$ \\
world-shipping & $-1.18 \times 10^{5}$ & $3.7 \times 10^{2}$ & $0.16$ \\
mediterranean-shipping & $-1.07 \times 10^{3}$ & $6.95 \times 10^{0}$ & $0.33$ \\
supply-chain & $1.08 \times 10^8$ & $6.67 \times 10^3$ & $0.0031$ \\
\bottomrule
\end{tabular}
\caption{Objectives and duality gaps produced from PDLP with feasibility polishing for instances that successfully terminated.
Note the relative gap is computed with the 
somewhat non-standard formula given in \Cref{eq:relative-gap}.
}\label{table:objectives-and-duality-gaps}
\end{table}

\section{Conclusion}

In this work, we demonstrated that feasibility polishing significantly improves the ability of PDLP to find solutions with tight feasibility tolerances (both primal and dual) but more relaxed optimality tolerances.
Practitioners would obviously prefer solutions that satisfy tight feasibility and optimality tolerances—solutions which are reliably provided by interior point methods and the simplex method on moderately large instances. 
However, the standard strategy of relaxing both feasibility and optimality tolerances to allow first-order methods to produce solutions within a reasonable timeframe \cite{o2016conic} is undesirable: moderate feasibility violations can give unexpectedly catastrophic solutions to the true underlying problem. 
This issue is not merely hypothetical; it occurs in the `irish-electricity' instance as discussed in the introduction. Moreover, ruling out this issue requires a solution with tight feasibility tolerances, which defeats the purpose of using a first-order method for a fast, inaccurate solve.

We also showed, on a new collection of very large linear programs and a machine with 1TB of RAM, that PDLP with feasibility polishing can find solutions with extremely small feasibility violations and moderate duality gaps for several problems that Gurobi is unable to solve within six days. 
While we expect our work to expand the number of large-scale linear programs that can be solved, it is important to emphasize the inherent limitations of first-order methods: they are unlikely to ever completely replace second-order methods.
In particular, one can construct linear systems with binary entries and condition number exponential in the dimension \cite{alon1997anti}. 
Such linear systems are unlikely to be solved efficiently by first-order methods, which in the worst case require a number of iterations polynomial in the condition number to obtain high-accuracy solutions.

Nonetheless, our numerical results illustrate that large sets of practical linear programs can be effectively tackled with first-order methods. 
Indeed, when constraint matrices are totally unimodular \cite{hinder2024worst} or have small circuit imbalance \cite{cole2025first}, first-order methods provably require only a polynomial (in the problem dimensions) number of iterations to find solutions satisfying the optimality conditions with tight tolerances. 
Relaxing the termination criteria to require only tight feasibility tolerances with moderate duality gap tolerances will likely expand this set even further.

\newpage 

\section*{Acknowledgements}

We thank Ed Klotz for his helpful discussion regarding the Irish Electricity instance and his help clarifying the 
termination criteria of Gurobi.
We also thank Joey Huchette for his feedback on an early draft of the paper.

This research was supported in part by the University of Pittsburgh Center for Research Computing, RRID:SCR\_022735, through the resources provided. Specifically, this work used the SMP cluster, which is supported by NSF award number OAC-2117681.

\section*{Author contributions}

Authors are ordered alphabetically. All authors contributed to writing, and regular discussion in meetings. The listed contributions span both the conference and journal papers.

DA and WS jointly designed the feasibility polishing implementation, and implemented most of the C++ code (except for the parts specifically mentioned elsewhere).

DA led the project after ML left Google, and generated the large-scale TSP instances which were our first large-scale test problems.

MD analyzed and implemented infeasibility detection features in both Julia and C++. He implemented and evaluated our step size choice against Malitsky and Pock's. He also proofread and evaluated the large instances designed by OH.

OH devised the restart and primal weight scheme, implementing them in FirstOrderLp.jl; he proposed, and demonstrated the feasibility polishing concept. He also created the design-match, supply-chain and heat-source test problems. DA and WS ran preliminary experiments for this paper and OH ran the final experiments.

HL's internship provided the impetus and initial code to pursue the idea of using first-order methods for LP. After his internship, HL, jointly with BOD, proposed the use of PDHG as the base algorithm. He designed and implemented the preconditioning scheme in both the FirstOrderLp.jl and C++ code. He also contributed to the development of the theory for restart strategies and infeasibility detection which influenced the implementation, and he implemented the production-inventory problem.

ML led the project while he was at Google, recruited and supervised HL, OH, and MD for their work on the project that was conducted at Google, contributed significantly to the FirstOrderLp.jl and OR-Tools implementations, created a framework for running large-scale experiments, suggested many aspects of the design of the numerical experiments, pushed forward the presolve enhancement, and led the open sourcing effort.

BOD, jointly with HL, proposed the usage of PDHG as the core algorithm to apply to LP. He also provided advice and feedback on the restarting techniques and the usage of SCS, and helped with data analysis and figure generation for the NeurIPS paper.

WS performed the initial experiments demonstrating the effectiveness of mirror-prox and PDHG for LP including some initial heuristics that inspired the primal weight scheme and restarts, and designed and implemented the sharded parallelism mechanism and the adaptive step size scheme.

\bibliographystyle{plainnat}
\bibliography{main}

\appendix

\SetKwFunction{PrimalFeasibilityStep}{PrimalFeasibilityStep}
\SetKwFunction{DualFeasibilityStep}{DualFeasibilityStep}
\SetKwFunction{PrimalDualStep}{PrimalDualStep}

\SetKwProg{Fn}{function}{}{end}
\newcommand{\Xfeas}{X_F}
\newcommand{\Yfeas}{Y_F}
\newcommand{\Zfeas}{Z_F}
\newcommand{\xfeas}{x_F}
\newcommand{\yfeas}{y_F}
\newcommand{\zfeas}{z_F}

\newpage

\section{Reduced cost selection for PDLP}\label{sec:reduced-cost-selection}
As discussed in Section~\ref{sec:pdhg}, there are multiple choices for recovering 
the reduced costs when running PDLP. These choices impact convergence detection but not the sequence of iterates. 
A natural choice is to set 
\[ 
r \gets \proj_{\mathcal{R}}(c - A^\T y);
\] 
however, an alternative that we found to be useful is to set 
\[
r_i \gets \begin{cases}
	c_i - a_i^\T y & \text{if } c_i - a_i^\T y > 0 \text{ and } x_i - (\ell_v)_i  \le \abs{x_i}   \\
	c_i - a_i^\T y & \text{if } c_i - a_i^\T y < 0 \text{ and } (u_v)_i - x_i  \le \abs{x_i}   \\
	0 & \text{otherwise}
\end{cases}
\]
where $a_i$ is the $i$th column of $A$. This choice is more robust to artificially large bounds.
For example, a relatively small residual reduced cost when a variable bound is extremely large, e.g., $10^{12},$ could prevent
convergence through its effect on the dual objective value.
This more advanced choice is only used when we run PDLP \emph{without} feasibility polishing.

\section{Derivation of \Cref{eq:specialized-pdhg}}\label{sec:derive-pdhg-for-this-paper}

This section shows how \Cref{eq:specialized-pdhg} follows from
specializing Algorithm 1 of \citet{chambolle2016ergodic} to our saddle-point formulation
\[
\min_{x \in \cX} \max_{y \in \cY}~
\Lag(x,y) = c^\top x - y^\top A x - p(y; -\uc, -\lc).
\]
The standard saddle-point form of \cite[Equation (2)]{chambolle2016ergodic} with $f(x) = 0$ and finite-dimensional spaces is
\[
\min_{x \in \R^{n}} \max_{y \in \R^{m}} y^\top K x + g(x) - h^{*}(y),
\]
where $g$ and $h$ are proper, lower semicontinuous, convex functions, and $h^*$ is the convex conjugate of $h$. 
This saddle-point problem reduces to our formulation by setting
\[
g(x) := c^\top x + I_{\cX}(x), \qquad
h^{*}(y) := I_{\cY}(y) +  p(y; -\uc, -\lc), \qquad
K := -A,
\]
where for any set $S$ and vector $s$ we define the indicator function
\[
I_{S}(s) := \begin{cases}
 \infty & \text{if } s \not\in S, \\
 0 & \text{if } s \in S.
\end{cases}
\]
The PDHG algorithm, i.e., Algorithm 1 of \citet{chambolle2016ergodic} with the Euclidean norm, takes the form 
\begin{subequations}
\begin{flalign}
x^{k+1} &= \argmin_{x} g(x) + (y^k)^\top K x + \frac{1}{2 \tau} \| x - x^k \|_2^2, \label{pdhg-primal-step} \\
y^{k+1} &= \argmin_{y} h^{*}(y) - y^\top K (2 x^{k+1} - x^k) + \frac{1}{2 \sigma} \| y - y^k \|_2^2, \label{pdhg-dual-step}
\end{flalign}
\end{subequations}
where $\tau,\sigma>0$ satisfy $\tau\sigma\|K\|_2^2 < 1$.

\paragraph{Primal step.}
To see that \Cref{pdhg-primal-step} reduces to \Cref{eq:specialized-pdhg:primal-step}, observe that
\begin{align*}
x^{k+1} &= \argmin_{x} g(x) + (y^k)^\top K x + \frac{1}{2 \tau} \| x - x^k \|_2^2 = \argmin_{x} c^\top x + I_{\cX}(x) + (y^k)^\top (-A) x + \frac{1}{2 \tau} \| x - x^k \|_2^2 \\
&= \argmin_{x \in \cX} c^\top x - (y^k)^\top A x + \frac{1}{2 \tau} \| x - x^k \|_2^2 = \argmin_{x \in \cX} (c - A^\top y^k)^\top x + \frac{1}{2 \tau} \| x - x^k \|_2^2 \\
&= \proj_{\cX}(x^k - \tau (c - A^\top y^k)).
\end{align*}

\paragraph{Dual step.}
To see that \Cref{pdhg-dual-step} reduces to \Cref{eq:specialized-pdhg:dual-step}, observe that
\begin{flalign}
y^{k+1} &= \argmin_{y} h^{*}(y) - y^\top K (2 x^{k+1} - x^k) + \frac{1}{2 \sigma} \| y - y^k \|_2^2 \notag \\
&= \argmin_{y} I_{\cY}(y) + p(y; -\uc, -\lc) - y^\top (-A) (2 x^{k+1} - x^k) + \frac{1}{2 \sigma} \| y - y^k \|_2^2 \notag \\
&= \argmin_{y \in \cY} p(y; -\uc, -\lc) + y^\top A (2 x^{k+1} - x^k) + \frac{1}{2 \sigma} \| y - y^k \|_2^2 \notag \\
&= \argmin_{y \in \cY} p(y; -\uc, -\lc) + \frac{1}{2 \sigma} \| y - (y^k - \sigma A (2 x^{k+1} - x^k)) \|_2^2 \notag \\
&= \prox_{\sigma p(\cdot; -\uc, -\lc)}(y^k - \sigma A (2 x^{k+1} - x^k)). \label{eq:y-k+1-prox-formula}
\end{flalign}
The proximal operator of $\sigma p(\cdot; -\uc, -\lc)$ can be written as
\[
\prox_{\sigma p(\cdot; -\uc, -\lc)}(v) = v - \sigma \proj_{[-\uc, -\lc]}(\sigma^{-1} v),
\]
due to Moreau's decomposition, i.e., $v = \prox_{f}(v) + \prox_{f^*}(v)$ for $f(v) = \sigma p(v; -\uc, -\lc)$, and the fact that convex conjugate of $f$ is $f^*(v) = I_{[-\uc, -\lc]}(\sigma^{-1} w)$. Substituting this expression into \Cref{eq:y-k+1-prox-formula} yields
\begin{align*}
y^{k+1} &= y^k - \sigma A (2 x^{k+1} - x^k) - \sigma \proj_{[-\uc, -\lc]}(\sigma^{-1} (y^k - \sigma A (2 x^{k+1} - x^k))) \\
&= y^k - \sigma A (2 x^{k+1} - x^k) - \sigma \proj_{[-\uc, -\lc]}(\sigma^{-1} y^k - A (2 x^{k+1} - x^k)),
\end{align*}
which matches \Cref{eq:specialized-pdhg:dual-step}.

\section{Theoretical basis for feasibility polishing}\label{app:feas-polishing-basis}

For simplicity, we focus on PDHG with a fixed step size less than $1 / \| A \|_2$
and a fixed frequency restart scheme. 
The former choice is typical  
for analyzing PDHG \cite{chambolle2011first,chambolle2016ergodic}.

Also for simplicity, we assume the LP instance takes the form $\min c^\T x$ such that $A x = b, x \ge 0$ and focus on primal feasibility polishing (i.e., $c=\zeros$).
However, this analysis generalizes to other settings such as the LP format described in Section~\ref{sec:pdhg} and dual feasibility polishing.
In this simple case, PDHG reduces to 
\begin{subequations}\label{eq:PDHG}
\begin{flalign}
x^{t+1} &\gets \proj_{x \ge 0}\left( x^{t} - \eta (c - A^\T y^{t}) \right) \\
y^{t+1} &\gets y^{t} - \eta (b - A (2 x^{t+1} - x^{t})) \ .
\end{flalign}
\end{subequations}
Let $\Xfeas := \{ x \ge 0 :  A x = b \}$ be the set of feasible solutions.
Let $\dist(X, x)  := \inf_{z \in X} \| x - z \|_2$.
The Hoffman constant  of $A$ is the largest constant, $\hoff{A}$, such that 
\begin{flalign}\label{eq:hoffman-bound}
\hoff{A} \dist( \Xfeas, x ) \le \| A x - b \|_2 \quad \quad \text{ for all } b \text{ such that } \Xfeas \neq \emptyset.
\end{flalign}
Thanks to \citet{hoffman_1952}, we know that this constant is always positive.
    
\begin{lemma}\label{lems-for-doing-restarts}
Suppose that $c = \zeros$ (so we have a primal feasibility problem), $0 < \eta < 1/\| A \|_2$ and $y^{0} = \zeros$.
Then PDHG, i.e., \eqref{eq:PDHG} with $\bx^{t} = \frac{1}{t} \sum_{i=1}^t x^{i}$ satisfies
\begin{flalign}
\| \bar{x}^{t} - x^{0} \|_2 &\le (q + 2) \dist(x^{0}, \Xfeas) \label{eq:dist-bound}  \\
\hoff{A} \dist( \Xfeas, \bx^{t} ) &\le \frac{ (q + 2) \dist(x^{0}, \Xfeas) }{\eta t}   \label{potential-reduction-guarrantee}
\end{flalign}
where $q = 4 \frac{1 + \eta \| A \|_2}{1 - \eta \| A \|_2}$.
\end{lemma}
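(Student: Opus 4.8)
The plan is to combine two classical facts about PDHG---Fej\'er monotonicity in the PDHG metric and the ergodic (averaged) duality-gap bound---with the Hoffman bound \eqref{eq:hoffman-bound}, specializing everything to the feasibility setting $c = \zeros$. Throughout I would write $d := \dist(x^0, \Xfeas)$, let $x^\star := \proj_{\Xfeas}(x^0)$ so that $\|x^0 - x^\star\|_2 = d$, and set $z^\star := (x^\star, \zeros)$. Since $c = \zeros$, the Lagrangian collapses to $\Lag(x,y) = y^\T(b - Ax)$, and a quick KKT check shows every such $z^\star$ is a saddle point, hence a fixed point of the PDHG map \eqref{eq:PDHG}. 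I would also record at the outset that \eqref{eq:PDHG} is the proximal-point iteration in a metric $\|\cdot\|_M$ whose matrix $M$ is symmetric positive definite precisely when $\eta\|A\|_2 < 1$, and that it obeys the norm-equivalence bounds $\tfrac{1-\eta\|A\|_2}{\eta}\|z\|_2^2 \le \|z\|_M^2 \le \tfrac{1+\eta\|A\|_2}{\eta}\|z\|_2^2$.

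For \eqref{eq:dist-bound}, I would use Fej\'er monotonicity: because PDHG is nonexpansive in $\|\cdot\|_M$ and $z^\star$ is a fixed point, $\|z^i - z^\star\|_M \le \|z^0 - z^\star\|_M$ for every $i$ (the underlying nonincreasing-distance property is exactly \cite{he2014convergence}). Converting between $\|\cdot\|_M$ and $\|\cdot\|_2$ via the displayed eigenvalue bounds yields $\|z^i - z^\star\|_2 \le \sqrt{\tfrac{1+\eta\|A\|_2}{1-\eta\|A\|_2}}\,\|z^0 - z^\star\|_2 = \sqrt{\tfrac{1+\eta\|A\|_2}{1-\eta\|A\|_2}}\,d$, where $y^0 = \zeros$ makes $\|z^0 - z^\star\|_2 = d$. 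Since the primal part satisfies $\|x^i - x^\star\|_2 \le \|z^i - z^\star\|_2$ and $\sqrt{s}\le s \le q/4$ for $s = \tfrac{1+\eta\|A\|_2}{1-\eta\|A\|_2}\ge 1$, a triangle inequality gives $\|x^i - x^0\|_2 \le (q+1)d$; averaging over $i$ and applying the triangle inequality once more proves $\|\bx^t - x^0\|_2 \le (q+2)d$.

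For \eqref{potential-reduction-guarrantee}, I would invoke the ergodic guarantee that, for every test point $\hat z = (\hat x, \hat y)$ with $\hat x \ge \zeros$, the averaged iterate obeys $\Lag(\bx^t, \hat y) - \Lag(\hat x, \bar y^t) \le \tfrac{1}{2t}\|z^0 - \hat z\|_M^2$. The trick is to choose $\hat z$ so the left side exposes the feasibility residual: take $\hat x = x^\star$ (so $A\hat x = b$ and, using $y^0=\zeros$, the term $\Lag(x^\star, \bar y^t) = (\bar y^t)^\T(b - Ax^\star) = 0$), and take $\hat y = d\,(b - A\bx^t)/\|b - A\bx^t\|_2$, which is feasible for the unconstrained dual and has $\|\hat y\|_2 = d$. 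The left side then equals $d\,\|A\bx^t - b\|_2$, while the upper eigenvalue bound gives $\|z^0 - \hat z\|_M^2 \le \tfrac{1+\eta\|A\|_2}{\eta}(d^2 + d^2)$; cancelling one factor of $d$ (the case $d=0$ being trivial, as then $z^0$ is already a fixed point) yields $\|A\bx^t - b\|_2 \le \tfrac{(1+\eta\|A\|_2)d}{\eta t} \le \tfrac{(q+2)d}{\eta t}$. Finally, the Hoffman bound \eqref{eq:hoffman-bound} converts residual into distance, $\hoff{A}\dist(\Xfeas, \bx^t) \le \|A\bx^t - b\|_2$, which is \eqref{potential-reduction-guarrantee}.

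The main obstacle I anticipate is pinning down the ergodic gap inequality in precisely the right form---with the correct metric $M$ and constant---and justifying that it remains valid for an \emph{unbounded} dual test point $\hat y$, since the dual variable here is unconstrained. This is exactly the normalized-duality-gap viewpoint of \cite{applegate2023faster}, so I would either cite their averaged-iterate bound directly or reprove the one-step telescoping inequality $\Lag(x^{i}, \hat y) - \Lag(\hat x, y^{i}) \le \tfrac{1}{2}(\|z^{i-1}-\hat z\|_M^2 - \|z^{i}-\hat z\|_M^2)$ and sum it, using convexity--concavity of $\Lag$ to pass to the average. The remaining ingredients---the norm-equivalence constants, the projection onto $x\ge\zeros$, and the Hoffman step---are routine.
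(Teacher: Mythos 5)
Your proof is correct, and it reaches both inequalities by a genuinely more self-contained route than the paper's. The paper's own proof is essentially two citations plus Hoffman: Corollary~2 of \cite{applegate2023faster} supplies $\|\bar{z}^t - z^0\|_2 \le (q+2)\dist(z^0, Z^\star)$, which equals $(q+2)\dist(x^0, X_F)$ because $y^0 = \zeros$ and, for $c = \zeros$, every point $(x^\star, \zeros)$ with $x^\star \in X_F$ is a saddle point; the same corollary gives the decay bound $\rho_{\|\bar{z}^t - z^0\|_2}(\bar{z}^t) \le 2\|\bar{z}^t - z^0\|_2/(t\eta)$ for the normalized duality gap, and equation~(22) of that reference lower-bounds the gap by $\|A\bx^t - b\|_2$. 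You instead re-derive both ingredients from classical facts: the distance bound from Fej\'er monotonicity of PDHG in the $M$-metric \cite{he2014convergence} combined with the eigenvalue bounds on $M$, and the residual decay from the Chambolle--Pock ergodic gap bound \cite{chambolle2016ergodic} evaluated at the explicit test point $\hat{z} = \bigl(x^\star,\; d\,(b - A\bx^t)/\|b - A\bx^t\|_2\bigr)$. This is precisely what the cited machinery does under the hood --- the normalized duality gap at radius $r$ is a supremum of Lagrangian gap values over a ball, and its lower bound by the residual is obtained from a residual-direction test point of exactly your type --- so the mathematical content is the same family, but the packaging differs in a meaningful way. Your route buys self-containedness (only classical PDHG results are needed) and, interestingly, a cleaner constant: your chain $\|A\bx^t - b\|_2 \le (1+\eta\|A\|_2)\,d/(\eta t) \le (q+2)\,d/(\eta t)$ yields the lemma exactly as stated, whereas literally chaining the paper's displayed inequalities produces $2(q+2)\,d/(\eta t)$, a factor of $2$ weaker than the claimed bound. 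What the paper's route buys is brevity and uniformity with the restart theory it extends. Two points to tighten in your write-up: (i) fix one sign convention throughout, since your $\Lag(x,y) = y^\T(b - Ax)$ is the negative of the paper's $y^\T A x - b^\T y$, and the choices of $\hat{y}$ and of the PDHG dual step must match whichever convention you adopt; and (ii) as you anticipated, no boundedness of the dual test set is needed: the telescoped ergodic inequality holds for every fixed $\hat{z}$ simultaneously, so it may legitimately be evaluated afterward at a $\hat{z}$ depending on $\bx^t$, with the degenerate case $d = 0$ handled separately as you note, since $z^0$ is then a fixed point of the iteration.
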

    
\begin{proof}
Let $\ZStar$ denote the set of minimax solutions to $\min_{x  \ge 0} \max_{y}  b^\T y -y^\T A x$.
From Corollary 2 of \cite{applegate2023faster} we have 
\begin{flalign}\label{eq:upper-bound-z-bar-minus-z0}
\| \bar{z}^{t} - z^0 \|_2 \le (q + 2) \dist(z^0, \ZStar) = (q + 2) \dist(x^0, X_F) 
\end{flalign}
which by $\| \bar{x}^{t} - x^{0} \|_2 \le \| \bar{z}^{t} - z^0 \|_2$ gives \eqref{eq:dist-bound}.

Let $\bar{z}^{t} = \frac{1}{t} \sum_{i=1}^t z^{i}$.
By Corollary 2 of \cite{applegate2023faster}  with $C = 1/\eta$ and the fact that $y^{0} = \zeros$ we have
\begin{flalign}\label{eq:rho-upper-bound}
\rho_{\| \bar{z}^{t} - z^{0} \|_2}(\bar{z}^{t}) \le \frac{2 \| \bar{z}^{t} - z^{0} \|_2}{t \eta}.
\end{flalign}
Moreover, by \citet{applegate2023faster} equation~(22), we have 
\begin{flalign}\label{eq:rho-lower-bound-primal-feas}
\rho_{\| \bar{z}^{t} - z^{0} \|_2}(\bar{z}^{t}) \ge \|A \bx^{t} - b \|_2 + \| (-c + A^\T \by^{t})^{+} \|_2 \ge \|A \bx^{t} - b \|_2
\end{flalign}
where $+$ is the positive part operator. 
Combining \eqref{eq:hoffman-bound}, \eqref{eq:upper-bound-z-bar-minus-z0}, 
\eqref{eq:rho-upper-bound}, and \eqref{eq:rho-lower-bound-primal-feas} yields \eqref{potential-reduction-guarrantee}.
\end{proof}

    \begin{algorithm}[H]
    \SetAlgoLined
    
     {\bf Input:} 
     A starting point $x^{0,0}$, a step-size $\eta \in (0,\infty)$, a length for the restart period $\ell \in \N$\;
    Initialize outer loop counter $n\gets 0$\;
    Initialize duals $y^{0,0} \gets \zeros$\;

     \For{$n = 0,1,2, \dots$}{
     \textbf{initialize the inner loop.} inner loop counter $t\gets 0$ \;
     \Repeat{$t \ge \ell$
    }{
    	Take a step of PDHG, i.e., calculate $(x^{n,t+1}, y^{n,t+1})$ from $(x^{n,t}, y^{n,t})$  via \eqref{eq:PDHG} \;
        $\bx^{n,t+1}\gets\frac{1}{t+1} \sum_{i=1}^{t+1} x^{n,i}$\label{line:average}\;  \label{line:output-is-average-of-iterates} 
        $t\gets t+1$\;
     }
      \textbf{restart the outer loop.} $x^{n+1,0}\gets \bx^{n,t+1}$, $y^{n+1,0} \gets \zeros$, $n\gets n+1$\;
     }
     \caption{A first-order method for quickly finding a nearby feasible solution}
     \label{al:restarted+algorithm}
    \end{algorithm}

    \begin{theorem}\label{thm:feasibility-polishing}
    Suppose that $c = \zeros$ (so we have a primal feasibility problem).
    Consider Algorithm~\ref{al:restarted+algorithm} with $0 < \eta < 1/\| A \|_2$ and $\ell \ge t^\star \defeq \Bigg\lceil \frac{2 (q + 2)}{\eta \hoff{A}} \Bigg\rceil$ where $q = 4 \frac{1 + \eta \| A \|_2}{1 - \eta \| A \|_2}$.
    Then, for all $n \ge 1$
    \begin{subequations}
    \begin{flalign}
    \dist(x^{n,0}, \Xfeas) &\le 2^{-n} \dist(x^{0,0}, \Xfeas) \label{eq:distance-contraction} \\
    \| x^{n,0} - x^{0,0} \|_2 &\le 2 (q + 2) \dist(x^{0,0}, \Xfeas). \label{eq:dist-bound-to-feas}
    \end{flalign}
    \end{subequations}
    \end{theorem}

\begin{proof}
We obtain \eqref{eq:distance-contraction} from \eqref{potential-reduction-guarrantee} and the assumption that $\ell \ge t^\star$.
Observe that 
\begin{equation}
\begin{aligned} 
\| x^{k,0} - x^{0,0} \|_2 &\le \| x^{k,0} - x^{k-1,0} \|_2 + \| x^{k-1,0} - x^{0,0} \|_2 \le (q+2) \dist(x^{k-1,0}, \Xfeas) + \| x^{k-1,0} - x^{0,0} \|_2 \\
&\le (q+2) 2^{1-k} \dist(x^{0,0}, \Xfeas) + \| x^{k-1,0} - x^{0,0} \|_2 
\end{aligned}
\label{eq:inequality-to-prove-induct}
\end{equation}
where the first inequality uses the triangle inequality, the second inequality uses \eqref{eq:dist-bound}, and the last inequality uses \eqref{eq:distance-contraction}.
Using \eqref{eq:inequality-to-prove-induct} will prove the following induction hypothesis:
\[
\| x^{n,0} - x^{0,0} \|_2 \le \left( \sum_{i=1}^{n} 2^{1-i} \right) (q + 2) \dist(x^{0,0}, \Xfeas).
\]  
In particular, \eqref{eq:inequality-to-prove-induct} with $k = 1$ shows the hypothesis for $n=1$ since $x^{k-1, 0} = x^{0,0}$. 
If the hypothesis holds for $n = k - 1$ then by \eqref{eq:inequality-to-prove-induct} it holds for $n = k$. Thus we have proved the induction
hypothesis.
Using the formula for the sum of an infinite geometric series we get $\sum_{i=1}^{n} 2^{1-i} \le \sum_{i=1}^{\infty} 2^{1-i} = 2$, thus the 
induction hypothesis implies \eqref{eq:dist-bound-to-feas}.
\end{proof}

\Cref{eq:distance-contraction} implies quick convergence of feasibility polishing to a feasible solution.
\Cref{eq:dist-bound-to-feas} implies
feasibility polishing will converge to a feasible solution that is approximately optimal, assuming the starting point $x^{0,0}
$ was approximately optimal. In particular, \Cref{eq:dist-bound-to-feas} implies,
\begin{flalign}\label{eq:gap-doesnt-move-too-much}
\abs{c^\T x^{n,0} - c^\T x^{0,0}} \le 2 (q + 2) \| c \|_2 \dist(x^{0,0}, \Xfeas) \le \frac{2 (q + 2) \| c \|_2  \| A x^{0,0} - b \|_2}{\hoff{A}}.
\end{flalign}
Moreover, taking $\eta = \alpha / \| A \|_2$ where $\alpha \in (0,1)$, as typical \cite{chambolle2011first,chambolle2016ergodic}, gives $q = 4 \frac{1 + \alpha}{1 - \alpha}$. 
For example, $\alpha = 1/2$ gives $q=12$. 
Thus, from \Cref{eq:gap-doesnt-move-too-much}, we conclude that if we start at a point $x^{0,0}$ with 
$c^\T x^{0,0} \approx c^\T \xStar$ and $\| A x^{0,0} - b \|_2 \approx 0$ then 
\[
\lim_{n \rightarrow \infty} A x^{n,0} = b \text{ and } \limsup_{n \rightarrow \infty} c^\top x^{n,0} - c^\top \xStar \le \abs{c^\top x^{0,0} - c^\top \xStar} + \frac{2 (q + 2) \| c \|_2  \| A x^{0,0} - b \|_2}{\hoff{A}} \approx 0.
\]

\section{Infeasibility termination criteria}\label{sec:infeasibility-termination}

Our infeasibility termination criteria closely follow those of SCS \cite{scs} and \citet{applegate2024infeasibility}.

We declare the problem primal infeasible if we find a point $(y,r) \in \cY \times \cR$ satisfying:
\[
- p(-y; \lc, \uc) - p(-r ; \lv, \uv)  > 0,  \quad \frac{\| A^\T y + r \|_{\infty}}{- p(-y; \lc, \uc) - p(-r ; \lv, \uv)} \le 10^{-9}.
\]
Recall that 
\[
p(y; \ell, u) := u^\T y^+ -  \ell^\T y^-,
\]
and therefore
\[
-p(-y; \lc, \uc) - p(-r ; \lv, \uv) = \lc^\T y^{+} - \uc^\T y^{-}  + \lv^\T r^{+} - \uv^\T r^{-}.
\]
We declare the problem dual infeasible if we find a point $(x,s) \in \bar{\mathcal{X}} \times \bar{\mathcal{S}}$ satisfying:
\[
c^\T x < 0, \quad \frac{\| A x + s \|_{\infty}}{\abs{c^\T x}} \le 10^{-9}, \quad s \in \bar{\mathcal{S}}, \quad x \in \bar{\mathcal{X}},
\]
where $\bar{\mathcal{X}} \subseteq \R^{n}$ and $\bar{\mathcal{S}} \subseteq \R^{m}$ are Cartesian products whose $i$th components are given by 
\begin{equation*}
\begin{aligned}
&\bar{\mathcal{S}}_i := \begin{cases}
\{ 0 \} & (\lc)_i, (\uc)_i \in \R,  \\
\R^{+} & (\lc)_i = -\infty, ~ (\uc)_i \in \R ,\\
\R^{-} & (\lc)_i \in \R, ~ (\uc)_i = \infty, \\
\R & \text{otherwise}; 
\end{cases}
\quad \text{ and } \qquad
\end{aligned}
\begin{aligned}
&\bar{\mathcal{X}}_i := \begin{cases}
\{ 0 \} & (\lv)_i, (\uv)_i \in \R, \\
\R^{+} & (\lv)_i \in \R, (\uv)_i = \infty, \\
\R^{-} & (\lv)_i = -\infty, (\uv)_i \in \R, \\
\R & \text{otherwise}.
\end{cases}
\end{aligned}
\end{equation*}

\textbf{Remarks:}
\begin{itemize} 
\item The threshold of $10^{-9}$ was selected to ensure that none of the large-scale test instances would incorrectly terminate with an infeasibility declaration. %
\item All instances were rescaled prior to solving (\Cref{sec:instances}),
which implicitly affects the infeasibility termination criteria in a manner analogous to how rescaling affects 
the optimality termination criteria; see \Cref{eq:rescaled-termination-criteria}.
\end{itemize}

\section{Detailed statistics for PDLP on the large-scale instances}\label{sec:detailed-PDLP-stats-large-scale}

\begin{table}[H]
\centering
\caption{Detailed statistics for PDLP on the large-scale instances. KKT passes refers to the total number of times that $A x$ and $A^\top y$ are computed. Residuals are measured in $\ell_\infty$-norm, i.e., absolute error (on the instances scaled as per \Cref{sec:instances}). The gap is the difference between the primal and dual objective. Rows with gray background indicate unsuccessful termination (i.e., either hitting the six day time limit or in the case of qap-tho-150, a numerical error as per Table~\ref{table:numerical-results}), in which case we report the statistics from the most recent recorded iteration. Memory is the maximum memory usage in GB according to SLURM. Note that finer grained information is available  at \url{http://www.oliverhinder.com/large-scale-lp-problems}.}
\label{tab:detailed_pdlp_results}
\begin{tabular}{@{}l r@{\quad} r@{\quad} r@{\quad} r@{\quad} r@{\quad} r@{\quad} r@{\quad} r@{}}
\toprule
& \multirow{2}{*}{\begin{tabular}{@{}c@{}}Time \\ (hrs)\end{tabular}} & \multirow{2}{*}{\begin{tabular}{@{}c@{}}KKT \\ passes\end{tabular}} & \multicolumn{3}{c}{Objective} & \multicolumn{2}{c}{Residuals} & \multirow{2}{*}{\begin{tabular}{@{}c@{}}Memory \\ (GB)\end{tabular}} \\
\cmidrule(lr){4-6} \cmidrule(lr){7-8}
Instance name & & & Primal & Dual & Gap & Primal & Dual & \\
\midrule
\multicolumn{9}{l}{\textbf{With polishing}} \\
\midrule
design-match & 9.34 & 1.5E+4 & 2.87E+6 & 2.87E+6 & 5.5E-2 & 4.0E-11 & 0 & 135 \\
heat-source-easy & 59.97 & 1.9E+6 & 5.16E+7 & 5.16E+7 & 9.1E-4 & 9.9E-9 & 9.3E-9 & 18 \\
\rowcolor{black!10} heat-source-hard & --- & 3.8E+6 & 4.67E+7 & 4.69E+7 & -1.6E+5 & 6.2E+0 & 9.7E+2 & 17 \\
mediterranean-shipping & 32.62 & 1.1E+5 & -1.07E+3 & -1.07E+3 & 6.9E+0 & 7.8E-9 & 0 & 81 \\
\rowcolor{black!10} production-inventory & --- & 1.6E+6 & 4.50E+4 & 4.50E+4 & 3.3E-2 & 2.7E-7 & 5.9E-2 & 29 \\
\rowcolor{black!10} qap-tho-150 & 0.14 & 2.4E+2 & 0 & -4.23E+35 & 4.2E+35 & 1.0E+0 & 0 & 96 \\
qap-wil-100 & 0.28 & 3.8E+3 & 2.21E+5 & 2.21E+5 & 1.1E+2 & 5.7E-9 & 0 & 22 \\
supply-chain & 18.90 & 6.1E+4 & 1.08E+8 & 1.08E+8 & 6.7E+3 & 5.6E-9 & 0 & 70 \\
tsp-gaia-100m & 21.06 & 7.8E+3 & 1.94E+9 & 1.94E+9 & 5.8E+4 & 7.6E-9 & 0 & 611 \\
tsp-gaia-10m & 2.99 & 7.9E+3 & 1.30E+8 & 1.30E+8 & 3.0E+3 & 6.6E-9 & 0 & 40 \\
world-shipping & 53.81 & 2.2E+5 & -1.18E+5 & -1.18E+5 & 3.7E+2 & 9.9E-9 & 0 & 91 \\
\midrule
\multicolumn{9}{l}{\textbf{Without polishing}} \\
\midrule
design-match & 67.95 & 1.1E+5 & 2.87E+6 & 2.87E+6 & -8.7E-6 & 9.8E-9 & 0 & 136 \\
\rowcolor{black!10} heat-source-easy & --- & 3.9E+6 & 5.16E+7 & 5.16E+7 & -2.3E-2 & 2.8E-6 & 4.7E-5 & 15 \\
\rowcolor{black!10} heat-source-hard & --- & 3.5E+6 & 4.70E+7 & 4.76E+7 & -6.6E+5 & 1.2E+1 & 7.2E+2 & 15 \\
\rowcolor{black!10} mediterranean-shipping & --- & 5.5E+5 & -1.07E+3 & -1.07E+3 & 6.7E+0 & 7.4E-6 & 0 & 72 \\
\rowcolor{black!10} production-inventory & --- & 1.3E+6 & 4.50E+4 & 4.50E+4 & 8.7E-2 & 2.9E-7 & 5.9E-2 & 27 \\
\rowcolor{black!10} qap-tho-150 & 0.14 & 2.4E+2 & 0 & -4.23E+35 & 4.2E+35 & 1.0E+0 & 0 & 97 \\
qap-wil-100 & 44.35 & 6.4E+5 & 2.21E+5 & 2.21E+5 & 1.6E-4 & 6.6E-9 & 1.0E-8 & 20 \\
\rowcolor{black!10} supply-chain & --- & 8.5E+5 & 1.08E+8 & 1.08E+8 & 5.9E+1 & 2.8E-3 & 1.9E-5 & 60 \\
\rowcolor{black!10} tsp-gaia-100m & --- & 5.6E+4 & 1.94E+9 & 1.94E+9 & 2.8E+1 & 1.8E-3 & 0 & 552 \\
\rowcolor{black!10} tsp-gaia-10m & --- & 3.9E+5 & 1.30E+8 & 1.30E+8 & 2.3E-1 & 3.6E-5 & 0 & 37 \\
\rowcolor{black!10} world-shipping & --- & 2.4E+5 & -1.18E+5 & -1.18E+5 & 7.4E+1 & 8.5E-2 & 0 & 80 \\
\bottomrule
\end{tabular}
\end{table}

\section{Additional numerical experiments}\label{sec:additional-numerical-experiements}

To supplement the experiments on large-scale linear programs in the body of the paper, we ran some additional experiments to verify the performance on the Mittelmann benchmark (\Cref{sec:mittelmann-bench}), a standard collection of slightly smaller instances, and on a set of infeasible instances (\Cref{sec:primal-infeasible-tests}).
For consistency, PDLP was set up exactly the same as it was for the large-scale experiments, including
keeping presolved disabled, rescaling the instances, and maintaining the same termination criteria.
For these experiments, we used Gurobi version 12.0.3 instead of 11.0.2 (the version used for the large-scale experiments in \Cref{sec:numerical-results}) because at the time of running these experiments, the Pitt computing cluster we use had updated its Gurobi version.

\subsection{Numerical results on Mittelmann's benchmark instances}\label{sec:mittelmann-bench}

We compared PDLP with and without feasibility polishing on the Mittelmann test set.
Our results are reported in \Cref{fig:numerical-results-Mittelmann}, which shows that feasibility polishing 
is never much slower than standard PDLP and can give orders of magnitude speed improvements, consistent
 with our large-scale numerical results in \Cref{sec:numerical-results}.
 
 We also tested the impact of reducing the feasibility tolerance (both for the primal and dual). The default feasibility tolerance is $10^{-8}$ as specified in \Cref{eq:termination-criteria}.  
 These results are reported in \Cref{fig:numerical-results-Mittelmann-different-feasibility-tolerances}, which shows that PDLP with feasibility polishing is insensitive to reductions in feasibility tolerance, while PDLP without feasibility polishing suffers significant performance degradation as the feasibility tolerance is reduced.
 Without polishing, one additional instance failed to solve at $10^{-10}$ feasibility tolerance, and a further eight additional instances failed at $10^{-12}$ feasibility tolerance. With polishing, one additional instance failed to solve at $10^{-10}$ feasibility tolerance, and a further four additional instances failed at $10^{-12}$ feasibility tolerance.
 
\Cref{table:mittelmann-results} reports the results for PDLP (with and without polishing), and Gurobi (barrier, primal simplex, and dual simplex) on the Mittelmann benchmark instances. From this table it is clear that, with the exception of three instances, Gurobi is almost always faster. However, it is important to keep in mind that this CPU implementation of PDLP studied in this paper is not intended for tackling this scale of problem and a GPU implementation is likely more suitable for such a comparison.

 \subsubsection{Instances incorrectly declared infeasible}\label{sec:instances-incorrectly-declared-infeasible}
 
 PDLP, both with and without polishing, fails to find an optimal solution in 14 of 49 instances. 
 Two of these failures occur because the instance was declared infeasible by PDLP (both with and without feasibility polishing):
 pds-100 and a2864. We found that dropping the infeasibility termination tolerance to $10^{-10}$ prevents pds-100 from being declared infeasible,
 allowing the problem to be solved in 1075.1 seconds with feasibility polishing and in 1727.3 seconds without feasibility polishing.
Decreasing the infeasibility termination tolerance for a2864 does not prevent it from being declared infeasible,
and detailed investigation indicates that the issue is likely a small numerical error either 
in the computation of the infeasibility criteria or in the rescaling scheme (See \url{https://github.com/google/or-tools/issues/4874}). Manually modifying the infeasibility criteria from
 $- p(-y; \lc, \uc) - p(-r ; \lv, \uv)  > 0$ to $- p(-y; \lc, \uc) - p(-r ; \lv, \uv)  > 10^{-14}$ fixes the issue,
allowing the problem to be solved in 116.4 seconds with feasibility polishing and in 351.9 seconds without feasibility polishing.

\begin{figure}
\center
\includegraphics[width=0.95\textwidth]{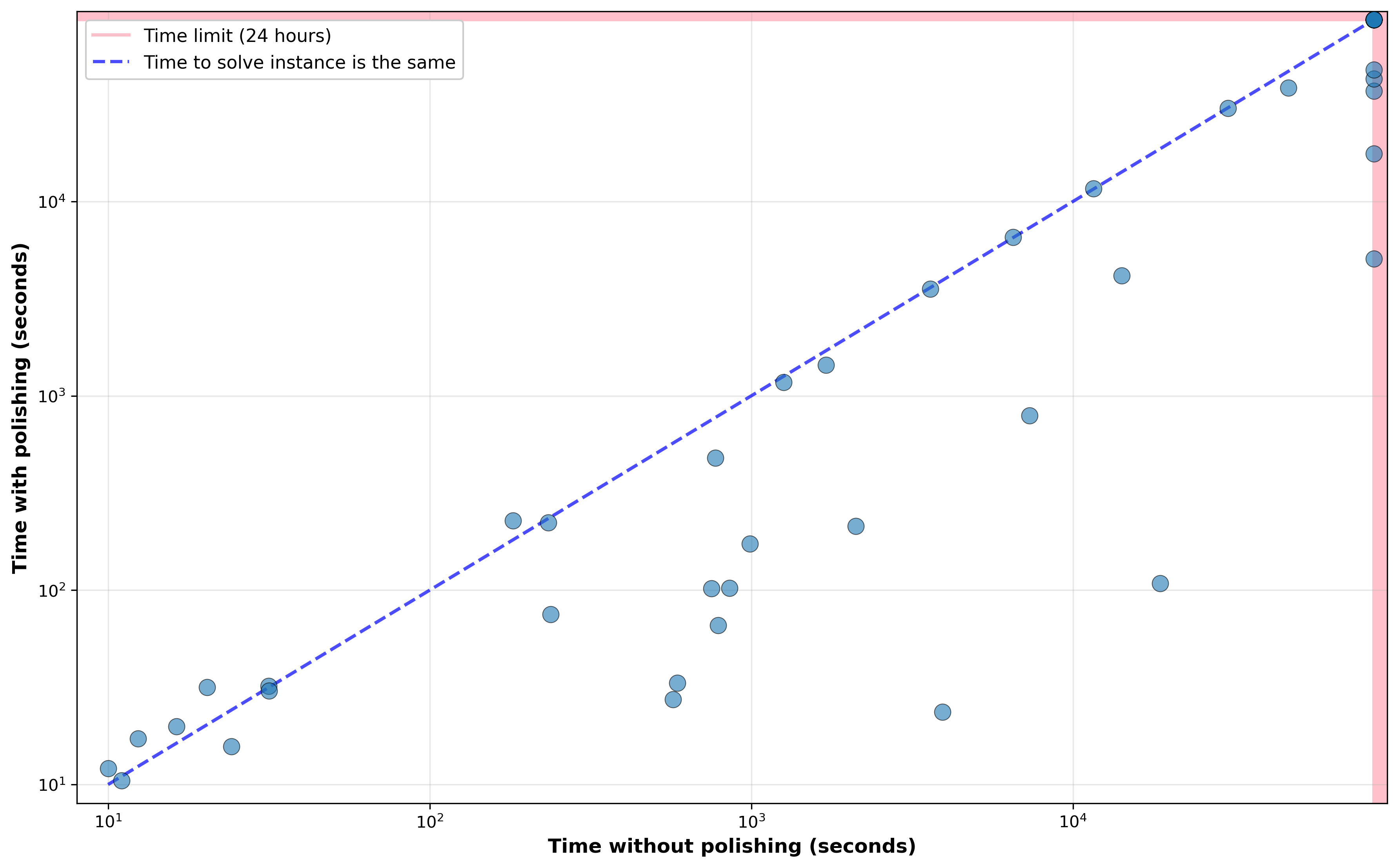}
\caption{Comparison of PDLP with and without feasibility polishing for the Mittelmann test set.
This test set consists of 49 problems, 14 of which were unsolved by both with and without polishing.
Note that all problems solved without polishing were also solved with the polishing option enabled.}
\label{fig:numerical-results-Mittelmann}
\end{figure}
\begin{figure}
\center
\includegraphics[width=0.95\textwidth]{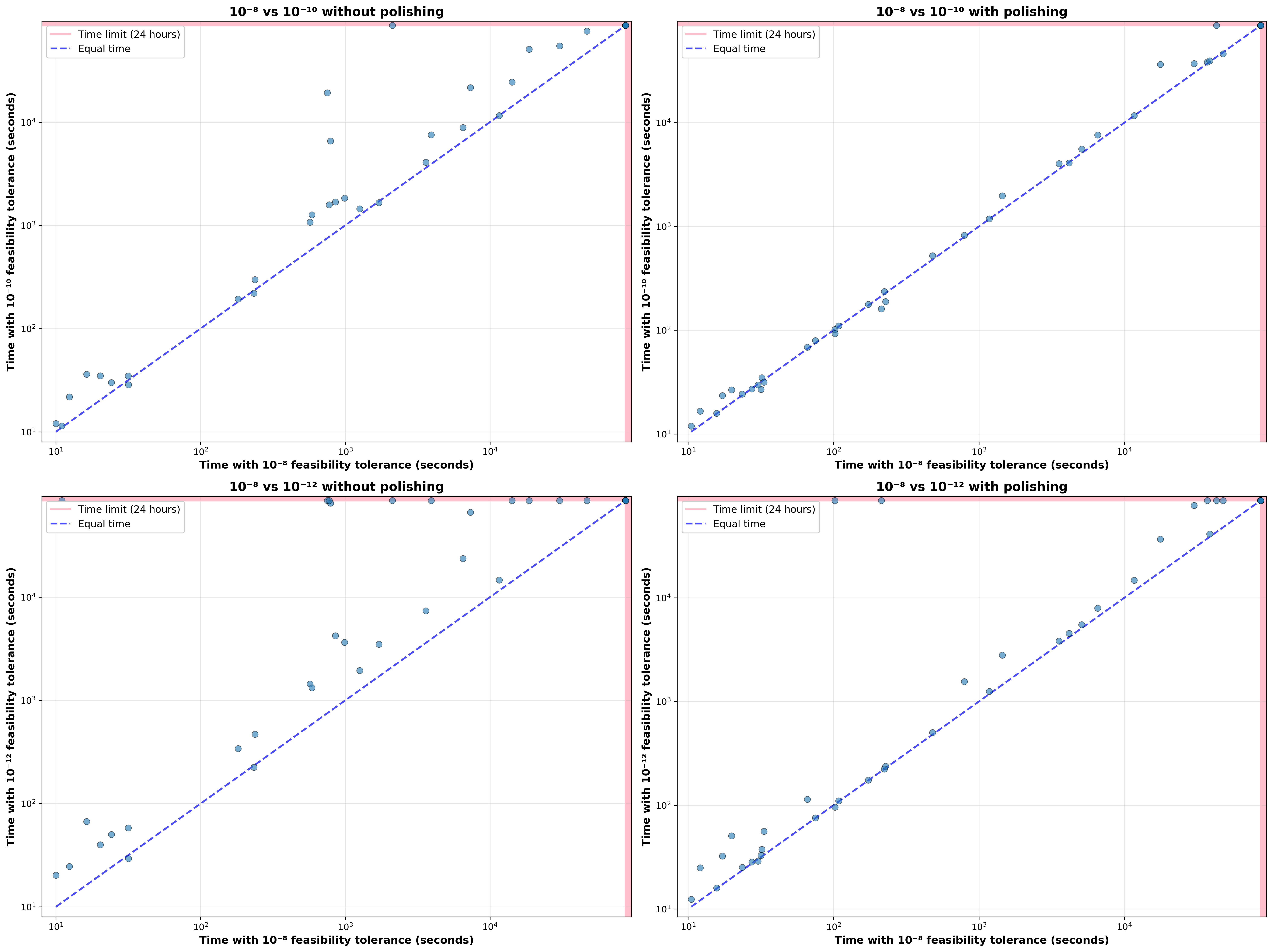}
\caption{Comparison of PDLP with and without feasibility polishing for the Mittelmann test set with different feasibility tolerances (both for the primal and dual).
The standard feasibility tolerance in this paper is $10^{-8}$.
}
\label{fig:numerical-results-Mittelmann-different-feasibility-tolerances}
\end{figure}

\begin{longtable}{LRRR}%
\caption{Statistics for Mittelmann's linear programming collection. Instances collected are based on
 the benchmark pages from 2025-06-18, which can be retrieved from archive.org:
 \url{https://web.archive.org/web/20250624111618/https://plato.asu.edu/ftp/lpfeas.html}.} \\
\toprule
\textbf{problem} & \textbf{\# constraints} & \textbf{\# variables} & \textbf{\# nonzeros} \\
\midrule 
\endfirsthead

\multicolumn{4}{c}{\tablename\ \thetable\ -- \textit{Continued from previous page}} \\
\toprule
\textbf{problem} & \textbf{\# constraints} & \textbf{\# variables} & \textbf{\# nonzeros} \\
\midrule
\endhead

\midrule
\multicolumn{4}{r}{\textit{Continued on next page}} \\
\endfoot

\bottomrule
\endlastfoot 
  
    L1\_sixm250obs & 986069 & 428032 & 4280320 \\
    Linf\_520c & 93326 & 69004 & 566193 \\
    a2864 & 22117 & 200787 & 20078717 \\
    bdry2 & 376500 & 250998 & 1500003 \\
    cont1 & 160793 & 40398 & 399991 \\
    cont11 & 160793 & 80396 & 439989 \\
    datt256 & 11077 & 262144 & 1503732 \\
    dlr1 & 1735470 & 9121907 & 18365107 \\
    ex10 & 69609 & 17680 & 1179680 \\
    fhnw-bin1 & 772872 & 1141653 & 8611326 \\
    fome13 & 48569 & 97840 & 334984 \\
    graph40-40 & 360900 & 102600 & 1260900 \\
    irish-e & 104260 & 61728 & 538809 \\
    neos & 479120 & 36786 & 1084461 \\
    neos3 & 512209 & 6624 & 1542816 \\
    neos-3025225 & 91572 & 69846 & 9357951 \\
    neos5052403 & 38269 & 32868 & 4898304 \\
    neos-5251015 & 486531 & 136971 & 1955388 \\
    ns1687037 & 50622 & 43749 & 1406739 \\
    ns1688926 & 32768 & 16587 & 1712128 \\
    nug08-3rd & 19728 & 20448 & 139008 \\
    pds-100 & 156244 & 505360 & 1390539 \\
    psched3-3 & 266228 & 79555 & 1062480 \\
    qap15 & 6331 & 22275 & 110700 \\
    rail02 & 95791 & 270869 & 756228 \\
    rail4284 & 4284 & 1092610 & 12372358 \\
    rmine15 & 358395 & 42438 & 879732 \\
    s82 & 87878 & 1690631 & 7022608 \\
    s100 & 14734 & 364417 & 2127672 \\
    s250r10 & 10963 & 273142 & 1572104 \\
    savsched1 & 295990 & 328575 & 1846351 \\
    scpm1 & 5000 & 500000 & 6250000 \\
    shs1023 & 133944 & 444625 & 1044725 \\
    square41 & 40161 & 62234 & 13628623 \\
    stat96v2 & 29089 & 957432 & 2852184 \\
    stormG2\_1000 & 528186 & 1259121 & 4228817 \\
    stp3d & 159488 & 204880 & 662128 \\
    support10 & 165685 & 14770 & 551152 \\
    tpl-tub-ws16 & 1154615 & 747691 & 4720567 \\
    woodlands09 & 194599 & 382147 & 2646003 \\
    Dual2\_5000 & 30000600 & 33050602 & 93001800 \\
    Primal2\_1000 & 1299380 & 2559380 & 5498140 \\
    thk\_48 & 6366377 & 8609262 & 27802878 \\
    thk\_63 & 5694387 & 7701112 & 21592414 \\
    L1\_sixm1000obs & 3082940 & 1426256 & 14262560 \\
    L2CTA3D & 210000 & 10000000 & 30000000 \\
    degme & 185501 & 659415 & 8127528 \\
    dlr2 & 7132926 & 38868107 & 78091589 \\
    set-cover & 10000 & 1102008 & 20442268 \\

\end{longtable}

\begin{longtable}{CRRRRRRR}
\caption{Computational results for Mittelmann benchmark collection.
The value --- signifies the problem failed to solve in the 24 hour time limit, $\dagger$ indicates the method ran out of memory, and $\cross$
signifies the problem was incorrectly declared infeasible (See discussion in Section~\ref{sec:instances-incorrectly-declared-infeasible}). One KKT pass refers to the calculation $A x$ and $A^\top y$.}\label{table:mittelmann-results} \\
\toprule
\textbf{Problem} & \multicolumn{4}{C}{\textbf{PDLP}} & \multicolumn{3}{C}{\textbf{Gurobi (minutes)}} \\
\cmidrule(lr){2-5} \cmidrule(lr){6-8}
& \multicolumn{2}{C}{\textbf{KKT Passes}} & \multicolumn{2}{C}{\textbf{Time (minutes)}} & \textbf{Barrier} & \multicolumn{2}{C}{\textbf{Simplex}} \\
\cmidrule(lr){2-3} \cmidrule(lr){4-5} \cmidrule(lr){7-8}
& \textbf{no polish} & \textbf{polish} & \textbf{no polish} & \textbf{polish} &  & \textbf{Primal} & \textbf{Dual} \\
\midrule
\endfirsthead

\multicolumn{8}{C}{\tablename\ \thetable\ -- \textit{Continued from previous page}} \\
\toprule
\textbf{Problem} & \multicolumn{4}{C}{\textbf{PDLP}} & \multicolumn{3}{C}{\textbf{Time for Gurobi}} \\
\cmidrule(lr){2-5} \cmidrule(lr){6-8}
& \multicolumn{2}{C}{\textbf{KKT Passes}} & \multicolumn{2}{C}{\textbf{Time}} & \textbf{Barrier} & \multicolumn{2}{C}{\textbf{Simplex}} \\
\cmidrule(lr){2-3} \cmidrule(lr){4-5} \cmidrule(lr){7-8}
& \textbf{no polish} & \textbf{polish} & \textbf{no polish} & \textbf{polish} &  & \textbf{Primal} & \textbf{Dual} \\
\midrule
\endhead

\midrule
\multicolumn{8}{R}{\textit{Continued on next page}} \\
\endfoot

\bottomrule
\endlastfoot

Dual2\_5000 & 79,497 & 79,497 & 193.36 & 194.59 & $\dagger$ & --- & \textbf{59.19} \\
L1\_sixm1000obs & 69,577 & 7,049 & 35.19 & 3.55 & 1.46 & 2.18 & \textbf{0.53} \\
L1\_sixm250obs & 53,324 & 6,733 & 12.53 & 1.70 & 0.11 & 0.11 & \textbf{0.09} \\
L2CTA3D & 19,872 & 987 & 13.16 & \textbf{1.10} & 22.03 & --- & --- \\
Linf\_520c & --- & --- & --- & --- & \textbf{0.09} & 9.20 & 1.00 \\
Primal2\_1000 & --- & 922,873 & --- & \textbf{293.87} & --- & 1181.92 & 1236.06 \\
a2864 & $\times$ & $\times$ & $\times$ & $\times$ & \textbf{0.03} & 5.06 & 6.20 \\
bdry2 & --- & --- & --- & --- & \textbf{0.27} & 71.18 & 96.51 \\
cont1 & --- & 3,331,570 & --- & 618.30 & \textbf{0.03} & 5.13 & 5.89 \\
cont11 & --- & 3,693,319 & --- & 714.41 & \textbf{0.04} & 38.09 & 25.66 \\
datt256 & 1,347 & 1,499 & 0.27 & 0.33 & \textbf{0.03} & 225.96 & 5.28 \\
degme & --- & --- & --- & --- & \textbf{0.73} & --- & --- \\
dlr1 & --- & --- & --- & --- & \textbf{0.91} & 210.87 & 39.98 \\
dlr2 & --- & --- & --- & --- & \textbf{9.14} & --- & --- \\
ex10 & 710 & 788 & 0.17 & 0.20 & --- & \textbf{0.06} & 0.08 \\
fhnw-bin1 & 357,434 & 357,594 & 108.48 & 109.10 & 2.54 & \textbf{0.45} & 3.86 \\
fome13 & --- & 440,673 & --- & 84.78 & \textbf{0.02} & 0.12 & 0.13 \\
graph40-40 & 1,090 & 1,286 & 0.21 & 0.29 & \textbf{0.02} & 0.39 & 0.12 \\
irish-e & --- & --- & --- & --- & \textbf{0.05} & 0.74 & 0.39 \\
neos & --- & 4,054,159 & --- & 794.93 & 0.15 & 1.87 & \textbf{0.11} \\
neos-3025225 & 11,141 & 11,141 & 3.02 & 3.80 & \textbf{0.21} & 7.83 & 2.55 \\
neos-5251015 & 2,805 & 2,805 & 0.53 & 0.54 & 0.06 & 47.87 & \textbf{0.05} \\
neos3 & 854 & 822 & 0.18 & 0.17 & \textbf{0.03} & 0.43 & 0.05 \\
neos5052403 & 40,302 & 1,700 & 9.51 & 0.46 & \textbf{0.08} & 2.83 & 0.85 \\
ns1687037 & --- & --- & --- & --- & \textbf{0.08} & 10.56 & 4.58 \\
ns1688926 & --- & --- & --- & --- & 0.03 & 1.16 & \textbf{0.02} \\
nug08-3rd & 1,537 & 1,689 & 0.34 & 0.53 & \textbf{0.01} & 0.82 & 0.62 \\
pds-100 & $\times$ & $\times$ & $\times$ & $\times$ & 0.25 & 1.97 & \textbf{0.17} \\
psched3-3 & --- & --- & --- & --- & \textbf{0.13} & 1.51 & 1.74 \\
qap15 & 50,434 & 1,926 & 9.81 & 0.55 & \textbf{0.01} & 0.21 & 0.30 \\
rail02 & --- & --- & --- & --- & \textbf{0.69} & 52.74 & 39.41 \\
rail4284 & 219,460 & 960 & 65.48 & 0.39 & \textbf{0.32} & 30.54 & 0.80 \\
rmine15 & 70,824 & 7,834 & 14.26 & 1.70 & \textbf{0.25} & 10.29 & 8.79 \\
s100 & 117,222 & 105,378 & 20.99 & 19.60 & \textbf{0.12} & 1.33 & 1.71 \\
s250r10 & 2,812,856 & 2,864,646 & 505.42 & 502.39 & \textbf{0.11} & 1.41 & 0.32 \\
s82 & 2,082,799 & 1,808,861 & 780.22 & 642.19 & \textbf{1.26} & 165.40 & 20.05 \\
savsched1 & 1,955 & 966 & 0.40 & 0.26 & \textbf{0.13} & 0.46 & 1.74 \\
scpm1 & 1,815 & 1,639 & 0.53 & 0.51 & \textbf{0.25} & 13.33 & 4.54 \\
set-cover & 688,191 & 3,385 & 310.96 & 1.80 & \textbf{1.69} & 755.10 & 1041.93 \\
shs1023 & --- & --- & --- & --- & \textbf{0.29} & 17.77 & 1.12 \\
square41 & 242,433 & 232,357 & 60.14 & 59.14 & \textbf{0.03} & 0.06 & 0.05 \\
stat96v2 & --- & --- & --- & --- & \textbf{0.38} & 108.24 & 30.46 \\
stormG2\_1000 & 48,713 & 30,365 & 12.90 & 7.98 & 0.58 & 13.71 & \textbf{0.38} \\
stp3d & 92,890 & 15,187 & 16.48 & 2.89 & \textbf{0.07} & 10.50 & 1.39 \\
support10 & 21,540 & 6,736 & 3.96 & 1.25 & \textbf{0.06} & 0.69 & 0.28 \\
thk\_48 & 140,512 & 14,658 & 122.21 & \textbf{13.20} & 58.65 & --- & --- \\
thk\_63 & 32,774 & 29,894 & 28.45 & 24.06 & \textbf{2.18} & --- & 12.94 \\
tpl-tub-ws16 & 713,992 & 208,841 & 236.37 & 69.26 & \textbf{0.54} & 6.00 & 13.90 \\
woodlands09 & 17,870 & 17,870 & 3.89 & 3.72 & \textbf{0.08} & 4.77 & 8.32 \\

\end{longtable}

\subsection{Numerical results for primal infeasibility detection}\label{sec:primal-infeasible-tests}

We compared PDLP with and without feasibility polishing on the NETLIB infeasibility test set (See \url{https://www.netlib.org/lp/infeas/readme}).
This test set mostly comprises smaller instances (See \Cref{table:infeasibility-problem-stats}), but it can be a good test of robustness. 

PDLP performance on this set is reported in \Cref{table:infeasibility-performance}. 
Overall, PDLP's performance on these problems is less than desired, with it only solving 15/29 problems.
Thus, improving the ability of first-order methods to reliably detect infeasibility is an important subject for future research.
We also find that on these instances, feasibility polishing has no notable impact on solver performance.

We also tested Gurobi on these problems but do not report the results because it was so dominant. In particular, every instance was declared primal infeasible by at least one Gurobi method in less than 0.1 seconds, except `pang', which was declared either infeasible or unbounded by both dual and primal simplex in 0.01 seconds.

\begin{longtable}{LRRR} %
\caption{Problem statistics for primal infeasible linear programs.}\label{table:infeasibility-problem-stats} \\
\toprule
\textbf{problem} & \textbf{\# constraints} & \textbf{\# variables} & \textbf{\# nonzeros} \\
\midrule 
\endfirsthead

\multicolumn{4}{C}{\tablename\ \thetable\ -- \textit{Continued from previous page}} \\
\toprule
\textbf{problem} & \textbf{\# constraints} & \textbf{\# variables} & \textbf{\# nonzeros} \\
\midrule
\endhead

\midrule
\multicolumn{4}{R}{\textit{Continued on next page}} \\
\endfoot

\bottomrule
\endlastfoot

bgdbg1 & 348 & 407 & 1440 \\
bgetam & 400 & 688 & 2409 \\
bgindy & 2671 & 10116 & 65502 \\
bgprtr & 20 & 34 & 64 \\
box1 & 231 & 261 & 651 \\
ceria3d & 3576 & 824 & 17602 \\
chemcom & 288 & 720 & 1566 \\
cplex1 & 3005 & 3221 & 8944 \\
cplex2 & 224 & 221 & 1058 \\
ex72a & 197 & 215 & 467 \\
ex73a & 193 & 211 & 457 \\
forest6 & 66 & 95 & 210 \\
galenet & 8 & 8 & 16 \\
gosh & 3792 & 10733 & 97231 \\
gran & 2658 & 2520 & 20106 \\
greenbea & 2393 & 5405 & 30883 \\
itest2 & 9 & 4 & 17 \\
itest6 & 11 & 8 & 20 \\
klein1 & 54 & 54 & 696 \\
klein2 & 477 & 54 & 4585 \\
klein3 & 994 & 88 & 12107 \\
mondou2 & 312 & 604 & 1208 \\
pang & 361 & 460 & 2652 \\
pilot4i & 410 & 1000 & 5141 \\
qual & 323 & 464 & 1646 \\
reactor & 318 & 637 & 2420 \\
refinery & 323 & 464 & 1626 \\
vol1 & 323 & 464 & 1646 \\
woodinfe & 35 & 89 & 140 \\

\end{longtable}

\begin{longtable}{RRRRR}
\caption{Computational results for the collection of primal infeasible linear programs given in \Cref{table:infeasibility-problem-stats}. The time limit for these problems was one hour.
One KKT pass refers to the calculation $A x$ and $A^\top y$.}\label{table:infeasibility-performance} \\
\toprule
\textbf{Problem} & \multicolumn{2}{C}{\textbf{Time (seconds)}} & \multicolumn{2}{C}{\textbf{KKT Passes}} \\
\cmidrule(lr){2-3} \cmidrule(lr){4-5}
& \textbf{no polishing} & \textbf{polishing} & \textbf{no polishing} & \textbf{polishing} \\
\midrule
\endfirsthead

\multicolumn{5}{C}{\tablename\ \thetable\ -- \textit{Continued from previous page}} \\
\toprule
\textbf{Problem} & \multicolumn{2}{C}{\textbf{Time (seconds)}} & \multicolumn{2}{C}{\textbf{KKT Passes}} \\
\cmidrule(lr){2-3} \cmidrule(lr){4-5}
& \textbf{no polishing} & \textbf{polishing} & \textbf{no polishing} & \textbf{polishing} \\
\midrule
\endhead

\midrule
\multicolumn{5}{r}{\textit{Continued on next page}} \\
\endfoot

\bottomrule
\endlastfoot

bgdbg1 & 104.0 & 87.6 & 7379 & 7379 \\
bgetam & 239.9 & 283.7 & 22160 & 22160 \\
bgindy & 1.9 & 2.0 & 68 & 68 \\
bgprtr & 14.3 & 15.6 & 1857 & 1857 \\
box1 & 139.0 & 142.0 & 11973 & 11973 \\
ceria3d & --- & --- & --- & --- \\
chemcom & 34.7 & 36.2 & 3081 & 3081 \\
cplex1 & --- & --- & --- & --- \\
cplex2 & --- & --- & --- & --- \\
ex72a & 360.6 & 380.7 & 31990 & 31990 \\
ex73a & 438.1 & 554.3 & 41611 & 41623 \\
forest6 & 64.3 & 66.6 & 5570 & 5570 \\
galenet & 0.1 & 0.1 & 67 & 67 \\
gosh & --- & --- & --- & --- \\
gran & --- & --- & --- & --- \\
greenbea & 231.4 & 214.5 & 18131 & 18131 \\
itest2 & 0.1 & 0.1 & 206 & 206 \\
itest6 & 0.1 & 0.1 & 66 & 66 \\
klein1 & --- & --- & --- & --- \\
klein2 & --- & --- & --- & --- \\
klein3 & --- & --- & --- & --- \\
mondou2 & 113.7 & 114.2 & 7693 & 7693 \\
pang & --- & --- & --- & --- \\
pilot4i & --- & --- & --- & --- \\
qual & --- & --- & --- & --- \\
reactor & --- & --- & --- & --- \\
refinery & --- & --- & --- & --- \\
vol1 & --- & --- & --- & --- \\
woodinfe & 41.4 & 37.8 & 3969 & 3969 \\

\end{longtable}

\end{document}